\newtheorem{theorem}{Theorem}[section]
\newtheorem{proposition}[theorem]{Proposition}
\theoremstyle{definition}
\newtheorem{definition}[theorem]{Definition}
\def\BbR{\mathbb{R}}
\def\BbN{\mathbb{N}}
\def\R{\BbR}
\def\S{\mathbb{S}}
\def\N{\BbN}
\theoremstyle{remark}
\newtheorem{remark}[theorem]{Remark}
\numberwithin{equation}{section}
\renewcommand\div{\text{div}\,}
\def\curl{\text{curl}\,}
\def\eps{\varepsilon}
\def\Div{{\rm div}\,}
\title{Lagrangian controllability at low Reynolds number}
\author{O. Glass}
\address{CEREMADE, Universit\'e Paris-Dauphine \& CNRS, PSL, Place du Mar\'echal de Lattre de Tassigny, 75775 Paris Cedex 16, France}
\author{T. Horsin}
\address{Conservatoire National des Arts et M\'etiers, M2N, Case 2D 5000, 292 rue Saint-Martin, 75003 Paris, France}
\date{}
\dedicatory{}
\begin{document}
\maketitle

\begin{abstract}
In this paper, we establish a result of Lagrangian controllability for a fluid at low Reynolds number, driven by the stationary Stokes equation.
This amounts to the possibility of displacing a part of a fluid from one zone to another by suitably using a boundary control.
This relies on a weak variant of the Runge-Walsh's theorem (on approximation of harmonic functions) concerning the Stokes equation.
We give two variants of this result, one of which we believe to be better adapted to numerical simulations.
\end{abstract}

%
%
%
%
%
%
\section{Introduction}
In this paper, we consider smooth solutions to the stationary Stokes equation in a bounded regular domain $\Omega$ of $\BbR^{N}$, with $N=2,3$:
\begin{subequations}\label{eq:stokes}
\begin{gather}
-\Delta u + \nabla p = 0 \hbox{ in }\Omega, \label{eq:stokes.1} \\
\div u = 0 \hbox{ in } \Omega, \label{eq:stokes.2} \\
\int_\Omega p \, dx=0. \label{eq:stokes.3}
\end{gather}
\end{subequations}
This system is standard to describe incompressible and highly viscous fluids; here $u: \Omega \rightarrow \R^{N}$ represents the velocity field, and $p: \Omega \rightarrow \R$ the pressure. \par
\ \par
The main questions addressed in this paper are of two forms. \par
\ \par
The first one is the problem of approximation of a solution of \eqref{eq:stokes} defined on the domain $\Omega$, by solutions of the same system, but defined on a larger set.
To be more precise, the question that we raise is the following: given $\Omega'$ an open set in $\BbR^N$ such that $\overline{\Omega}\subset \Omega'$  and $K$ a compact subset of $\Omega$, $k\in \BbN$, $\alpha\in (0,1)$, and given $(u,p)$ a regular solution of  \eqref{eq:stokes} and $\eps>0$, is it possible to find $(\overline{u},\overline{p})$ a solution of the equations \eqref{eq:stokes} on $\Omega'$ such that
\begin{equation*}
\| (u,p) - (\overline{u},\overline{p}) \|_{C^{k,\alpha}(K)} \leq \eps ?
\end{equation*}
This question, to which we give a partial positive answer, is related to the famous Runge theorem concerning the approximation of holomorphic functions by rational functions as well as to its extension by Walsh (see e.g. \cite{GARD}) to the case of harmonic functions on a open set in $\BbR^N$. The former has been used in \cite{GlHo08} to obtain a result of approximate Lagrangian controllability of the incompressible Euler equation in dimension $2$, whereas the latter has been used in \cite{GlHo10} to get a result in dimension $3$. Both results can be considered as the cornerstone of the known approaches to Lagrangian controllability (though the complete method require other technical results). \par
\ \par
Our second main question here deals precisely with the Lagrangian controllability itself in the framework of the Stokes model.
Given a fluid model, achieving the Lagrangian controllability between two subsets consists in being able to act on a given part of the domain in such a way that the resulting fluid flow maps one subset on the other in a given time. We moreover require that in between the fluid particles of the concerned subset do not leave the domain. \par
Let us be more specific on the problem under view. Let $\Omega$ a smooth bounded domain, and let $\Sigma$ an open nonempty part of its boundary.
Here we consider \eqref{eq:stokes} in quasi-static form, that is, the solution is time-dependent even if the driving equation is stationary. Moreover, we consider the system controlled from $\Sigma$, that is, we assume that we can prescribe the boundary conditions on $\Sigma$. On $\partial \Omega \setminus \Sigma$, on the contrary, this is constrained by the standard Dirichlet boundary conditions. So we write the system as follows:
\begin{subequations} \label{eq:stokes2}
\begin{gather}
\label{eq:stokes2.1} 
-\Delta u + \nabla p = 0\mbox{ in }(0,T)\times \Omega, \\
\label{eq:stokes2.2}
\div u = 0 \mbox{ in } (0,T)\times \Omega, \\
\label{eq:stokes2.3} 
\int_\Omega p \, dx = 0,\,\mbox{ on }(0,T), \\
\label{eq:stokes2.4}
u = 0 \mbox{ on } (0,T) \times (\partial \Omega \setminus \Sigma).
\end{gather}
\end{subequations}
With this form, the Lagrangian controllability raises the question of the possibility given two sets, to find a suitable control on $ [0,T] \times \Sigma$, so that the flow associated to the solution $u$ sends the first set on the second one. There are naturally constraints on the two sets: they should have the same area/volume (due to the incompressibility of the model), and we add a constraint on their form (namely, that they are smooth Jordan curves/surfaces). Moreover, one can relax this notion of Lagrangian controllability to the {\it approximate} Lagrangian controllability by asking the first set to be merely sent arbitrarily close to the target, rather than reaching it exactly. \par
Note that System~\eqref{eq:stokes2} is a good approximation of the standard Navier-Stokes system
\begin{subequations} \label{eq:navier-stokes}
\begin{gather}
\partial_t u+(u \cdot \nabla) u- \nu \Delta u+\nabla p=0\label{eq:navier-stokes.1} \\
\div u =0.
\end{gather}
\end{subequations}
when the Reynold's number $R_e$ tends to $0^+$, that is, when $\nu \rightarrow +\infty$. This model has already been studied in the framework of controllability in different situations. In \cite{JSMTTMT07}, a model of very small species moving using flagella like is studied. The localization of the action is used to model the fluid as in \eqref{eq:stokes2} but on a time dependent set.  Let us also mention the works \cite{Alouges-al}, \cite{Giraldi-al} and \cite{Loheac-Munnier}, where the problem of swimming in a Stokes fluid which is considered to be quasi-static is considered. \par
The assumption of a very small Reynold's number means that the inertia is neglected compared to viscosity forces. We have already obtained Lagrangian controllability results in the opposite case of infinite Reynold's number which corresponds to the inviscid case driven by the Euler equation (see \cite{GlHo08,GlHo10}). However in the intermediate case which would be described by Navier-Stokes equation \eqref{eq:navier-stokes}, the Lagrangian controllability remains an open problem. \par
%

%
%
%
%
%
\section{Main results}
The first main result of this paper, concerning the partial extension of Walsh's theorem to the Stokes equation, is an approximation result for solutions of equation \eqref{eq:stokes}. It is an adaptation of Walsh's theorem of harmonic approximation. For the reader's convenience, we recall Walsh's theorem or its following variant, see \cite[Theorem 1.7]{GARD}.
\begin{theorem}[Walsh, Gardiner] \label{th:approxharm1}
Let ${\mathcal O}$ be an open set in $\R^{N}$ and let $K$ be a compact set in $\R^{N}$
such that that ${\mathcal O}^{*} \setminus K$ is connected, where ${\mathcal O}^{*}$ is the Alexandroff compactification of ${\mathcal O}$. Then, for each  function $u$ which is harmonic on an open set containing $K$ and each $\varepsilon>0$, there is a harmonic function $v$ in ${\mathcal O}$
such that $\|v - u\|_{\infty} < \varepsilon$ on $K$.
\end{theorem}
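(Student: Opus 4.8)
The plan is to argue by duality, following the classical potential-theoretic route to Runge--Walsh type theorems. The statement is only meaningful when $K\subseteq\mathcal{O}$ (otherwise $v$ is not defined on all of $K$), so assume this, and recall that $K$ is then compactly contained in $\mathcal{O}$. By the Hahn--Banach theorem together with the Riesz representation of $C(K)^{*}$, it suffices to prove the dual statement: if $\mu$ is a finite signed Radon measure supported in $K$ with $\int_{K}v\,d\mu=0$ for every $v$ harmonic on $\mathcal{O}$, then $\int_{K}u\,d\mu=0$ for every $u$ harmonic on a neighbourhood of $K$. Indeed, this says precisely that $u|_{K}$ lies in the closure in $C(K)$ of the set of restrictions to $K$ of functions harmonic on $\mathcal{O}$, which is the assertion of the theorem.

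So fix such a $\mu$ and consider its Newtonian potential $p_{\mu}(x)=\int_{K}E(x-y)\,d\mu(y)$, where $E$ is the fundamental solution of $-\Delta$ on $\R^{N}$ (the logarithmic kernel when $N=2$). Then $p_{\mu}$ is harmonic on $\R^{N}\setminus K$ and $-\Delta p_{\mu}=\mu$ in $\mathcal{D}'(\R^{N})$. Testing the hypothesis against the constant function gives $\mu(K)=0$, whence $p_{\mu}(x)\to 0$ as $|x|\to\infty$ (the potentially divergent logarithmic term cancels when $N=2$). Moreover, for $x\in\R^{N}\setminus\mathcal{O}$ the function $y\mapsto E(x-y)$ is harmonic on $\mathcal{O}$ and continuous on $K$, so the hypothesis forces $p_{\mu}(x)=0$; hence $p_{\mu}$ vanishes identically on $\R^{N}\setminus\mathcal{O}$.

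The heart of the proof, and the place where the topological hypothesis enters, is the upgrade to $p_{\mu}\equiv 0$ on all of $\R^{N}\setminus K$. Since $K$ is compactly contained in $\mathcal{O}$, $p_{\mu}$ is harmonic, hence real-analytic, on a neighbourhood of $\partial\mathcal{O}$ and vanishes on the open set $\R^{N}\setminus\overline{\mathcal{O}}$; by unique continuation for harmonic functions it vanishes near every point of $\partial\mathcal{O}$. The assumption that $\mathcal{O}^{*}\setminus K$ is connected means exactly that no connected component of $\mathcal{O}\setminus K$ is relatively compact in $\mathcal{O}$, i.e. each such component is unbounded or meets $\partial\mathcal{O}$; propagating the vanishing along each component (again by analyticity), and using the decay at infinity on the unbounded component of $\R^{N}\setminus K$, one gets $p_{\mu}=0$ on $\mathcal{O}\setminus K$ as well, so $p_{\mu}\equiv 0$ on $\R^{N}\setminus K$. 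If $\mathcal{O}$ is unbounded one first reduces to the bounded case by intersecting with a large ball, or by a Kelvin transform, after checking that the connectedness hypothesis is preserved. I expect this topological propagation to be the main obstacle: one must track the components of $\mathcal{O}\setminus K$ and the behaviour at the Alexandroff point with care, and the case $N=2$ needs separate bookkeeping of the total mass in the logarithmic potential.

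Finally, with $p_{\mu}\equiv 0$ on $\R^{N}\setminus K$, fix $u$ harmonic on an open neighbourhood $W$ of $K$ and pick $\chi\in C_{c}^{\infty}(W)$ with $\chi\equiv 1$ near $K$. Since $\operatorname{supp}\mu\subseteq K$ and $p_{\mu}\in L^{1}_{\mathrm{loc}}(\R^{N})$, Green's identity gives
\[
\int_{K}u\,d\mu=\langle -\Delta p_{\mu},\,\chi u\rangle=-\int_{\R^{N}}p_{\mu}\,\Delta(\chi u)\,dx .
\]
Writing $\Delta(\chi u)=\chi\,\Delta u+2\,\nabla\chi\cdot\nabla u+u\,\Delta\chi$, one sees that on $\operatorname{supp}(\nabla\chi)\cup\operatorname{supp}(\Delta\chi)$ we have $p_{\mu}=0$ (these sets lie outside $K$), while on $\operatorname{supp}\chi$ we have $\Delta u=0$; hence the integrand vanishes identically and $\int_{K}u\,d\mu=0$. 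This completes the duality argument and the proof.
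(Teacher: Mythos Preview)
The paper does not prove Theorem~\ref{th:approxharm1}: the result is simply quoted from Gardiner's book, so there is no proof in the paper to compare against. Your duality approach (Hahn--Banach, Riesz representation, and analysis of the Newtonian potential $p_{\mu}$ of an annihilating measure) is exactly the classical route to Walsh's theorem, and the overall architecture is correct.

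There are, however, two genuine gaps in your propagation step, precisely where you yourself flag the difficulty. First, you launch unique continuation from the \emph{open} set $\R^{N}\setminus\overline{\mathcal{O}}$, but this set may be empty (e.g.\ $\mathcal{O}=\R^{N}$ or $\mathcal{O}=\R^{N}\setminus\{0\}$), or it may fail to meet a given component of $\R^{N}\setminus K$ even though $\R^{N}\setminus\mathcal{O}$ does. The standard fix is to note that for $x\notin\mathcal{O}$ every derivative $\partial_{x}^{\alpha}E(x-\cdot)$ is also harmonic on $\mathcal{O}$, so the hypothesis gives $\partial^{\alpha}p_{\mu}(x)=0$ for all $\alpha$; real-analyticity of $p_{\mu}$ on $\R^{N}\setminus K$ then forces $p_{\mu}\equiv 0$ on the whole component of $\R^{N}\setminus K$ containing $x$. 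Second, for the unbounded component $D_{\infty}$ of $\R^{N}\setminus K$ in the case $D_{\infty}\subset\mathcal{O}$ (e.g.\ $\mathcal{O}=\R^{N}$ and $K$ a closed ball), mere decay $p_{\mu}\to 0$ at infinity is \emph{not} enough: $|x|^{2-N}$ is harmonic on $\R^{N}\setminus\{0\}$, decays, and is not identically zero. Here you must test $\mu$ against harmonic polynomials (which are harmonic on $\mathcal{O}$) and use the multipole expansion of $E(x-y)$ for $|x|>\max_{y\in K}|y|$ to obtain $p_{\mu}(x)=0$ outside a large ball, then propagate by analyticity. Your proposed reduction ``intersect $\mathcal{O}$ with a large ball'' runs the wrong way: shrinking $\mathcal{O}$ enlarges the class of approximants, so density for the smaller domain does not imply density for the original one. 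With these two patches your argument is complete.
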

A typical situation is when ${\mathcal O}=\R^{N}$ and $K$ is contractile. Then one can approximate on $K$ a harmonic function defined on a neighborhood of $K$ by harmonic polynomials. Here we prove a variant for the Stokes equation, where the approximation is defined only on a bounded open set, not the whole space. Precisely, we establish the following.
\begin{theorem}\label{Thm:RWforS}
Let $K$ a compact set in $\R^{N}$, $N=2$ or $3$. Let ${\mathcal V}$ and $\Omega$ two bounded open sets such that $K \subset {\mathcal V}$, $\overline{\mathcal V} \subset \Omega$ and each connected component of $\R^{N} \setminus K$ contains an interior point of $\R^{N} \setminus \Omega$. Then for any solution $(u,p) \in C^{\infty}({\mathcal V};\R^{N+1})$ of the Stokes equation in ${\mathcal V}$:
\begin{equation} \label{Eq:StokesOmega}
\left\{ \begin{array}{l}
- \Delta u + \nabla p = 0, \\
\Div u =0
\end{array} \right.
\end{equation}
for any $k \in \N$ and any $\varepsilon>0$ there exists $(\overline{u},\overline{p}) \in C_{c}^{\infty}(\R^{N}; \R^{N+1})$ a solution of the Stokes equation in $\Omega$:
\begin{equation} \label{Eq:StokesBR}
\left\{ \begin{array}{l}
- \Delta \overline{u} + \nabla \overline{p} = 0, \\
\Div \overline{u} =0
\end{array} \right.
\text{ in } \Omega,
\end{equation}
and
\begin{equation} \label{Eq:Approx}
\| \overline{u} - u \|_{C^{k}(K)} \leq \varepsilon.
\end{equation}
\end{theorem}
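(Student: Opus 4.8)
The plan is to prove the theorem by a Hahn--Banach duality argument of Lax--Malgrange type (as in the classical proofs of Runge's and Walsh's theorems), with the Oseen tensor of the Stokes system playing the role of the Cauchy or Newtonian kernel. Fix $k\in\N$. Let $S$ be the linear subspace of $C^{k}(K;\R^{N})$ consisting of all $\overline{u}|_{K}$ for which there is $\overline{p}$ with $(\overline{u},\overline{p})\in C_{c}^{\infty}(\R^{N};\R^{N+1})$ solving the Stokes system in $\Omega$; the assertion of the theorem is precisely that $u|_{K}$ lies in the closure of $S$ in $C^{k}(K;\R^{N})$. By the Hahn--Banach theorem it suffices to show that every continuous linear functional on $C^{k}(K;\R^{N})$ that vanishes on $S$ also vanishes at $u|_{K}$. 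Such a functional is represented by an $\R^{N}$-valued distribution $\mu=(\mu_{1},\dots,\mu_{N})$ of order at most $k$ with support in $K$, and for a function $g$ smooth near $K$ the pairing $\langle\mu,g\rangle$ is well defined.

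Given such a $\mu$ annihilating $S$, I would introduce its Stokes potential. Let $(U,P)$ be the Oseen tensor (the fundamental solution of the steady Stokes system, with $U$ matrix-valued and $P_{j}=\partial_{j}E$, $E$ the fundamental solution of the Laplacian), and set $v:=U\ast\mu$ and $q:=P\ast\mu$ (with the natural contractions), so that $-\Delta v+\nabla q=\mu$ and $\Div v=0$ on $\R^{N}$, and $(v,q)$ is real-analytic on $\R^{N}\setminus K$. The point is to feed into the relation $\langle\mu,\,\cdot\,\rangle=0$ a sufficiently rich family of compactly supported Stokes solutions on $\Omega$. Fix an interior point $x_{0}$ of $\R^{N}\setminus\Omega$, i.e.\ $x_{0}\notin\overline{\Omega}$, and choose cut-offs $\chi,\psi\in C_{c}^{\infty}(\R^{N})$ equal to $1$ on a neighbourhood of $\overline{\Omega}$ and vanishing near $x_{0}$. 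Then for each $m$ the pairs $\bigl(\chi\,U(\cdot-x_{0})e_{m},\ \psi\,P(\cdot-x_{0})e_{m}\bigr)$ and $\bigl(\chi\,\nabla E(\cdot-x_{0}),\,0\bigr)$ belong to $C_{c}^{\infty}(\R^{N};\R^{N+1})$ and solve the Stokes system in $\Omega$ (where all cut-offs are $\equiv 1$ and where the Stokeslet, its pressure, and the source are regular since $x_{0}\notin\overline{\Omega}$). Testing $\mu$ against them, and using the evenness of $U$ and $E$ and the symmetry $U_{ij}=U_{ji}$, one finds $v(x_{0})=0$ and $q(x_{0})=0$. Both families are needed: the Stokeslets annihilate $v$, whereas the potential sources $\nabla E(\cdot-x_{0})$ are what forces $q$ to vanish, including on the bounded components of $\R^{N}\setminus\overline{\Omega}$ --- the holes of $\Omega$ --- on which $v\equiv 0$ alone would leave $q$ merely locally constant (one can in fact exhibit a $\mu$ annihilating the cut-off Stokeslets but not $S$).

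Since $v$ is biharmonic and $q$ is harmonic on $\R^{N}\setminus K$, both are real-analytic there; vanishing on the nonempty open set $\R^{N}\setminus\overline{\Omega}$, they must vanish on every connected component of $\R^{N}\setminus K$ that it meets, which by hypothesis is every component. Hence $v$ and $q$ vanish on $\R^{N}\setminus K$, i.e.\ they are distributions supported in $K$. Multiplying $u$ by a cut-off equal to $1$ near $K$ to legitimize the integrations by parts, one then gets
\[
\langle\mu,u|_{K}\rangle=\langle-\Delta v+\nabla q,\,u\rangle=\langle v,\,-\Delta u\rangle-\langle q,\,\Div u\rangle=\langle v,\,-\nabla p\rangle-0=\langle\Div v,\,p\rangle=0,
\]
using the Stokes equation $-\Delta u+\nabla p=0$ and $\Div u=0$ near $K$ together with $\Div v=0$. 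This proves that $u|_{K}$ lies in the closure of $S$, and extracting an element of $S$ within $\varepsilon$ of $u|_{K}$ in $C^{k}(K)$ furnishes the desired pair $(\overline{u},\overline{p})\in C_{c}^{\infty}(\R^{N};\R^{N+1})$.

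The step I expect to be the real obstacle is the middle one: pinning down exactly which compactly supported Stokes solutions on $\Omega$ are at our disposal as test objects, and checking that they force \emph{both} components of the Stokes potential to vanish outside $\overline{\Omega}$ --- in particular the realisation that cut-off Stokeslets alone are insufficient and that cut-off potential sources must be added to control the pressure part on the holes of $\Omega$ --- and then carrying the vanishing to all of $\R^{N}\setminus K$ by analytic continuation, which is where the precise topological hypothesis on the components of $\R^{N}\setminus K$ enters. The remaining ingredients --- the description of the dual of $C^{k}(K)$, real-analyticity of Stokes solutions, and the elementary convolution and parity identities for $U$ and $P$ (the only $N=2$ versus $N=3$ difference being the logarithmic versus power form of $E$, which does not affect the argument) --- are standard.
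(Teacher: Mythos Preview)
Your proof is correct and takes a genuinely different route from the paper. The paper proceeds by reduction to Walsh's theorem for harmonic functions: after a geometric normalisation (Whitney, Sard) it subtracts $\nabla\Delta^{-1}\tilde p$ from $u$ to obtain a harmonic field, applies Theorem~\ref{th:approxharm1} componentwise, and then repairs first the $\curl$ and then the $\Div$ of the resulting approximation, absorbing the cohomological obstructions into the finite-dimensional spaces $\mathcal{H}^1(\mathcal{V}_1)$ and $\mathcal{H}^2(\mathcal{V}_1)$; the pressure $\overline p$ only emerges at the very end as $-\Delta(\theta+\beta)$. Your argument is instead a direct Lax--Malgrange duality for the Stokes operator itself, with the Oseen tensor replacing the Newtonian kernel: Walsh's theorem is never invoked, no geometric preliminaries are needed, and there is no cohomology bookkeeping. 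The paper's route makes visible how the Stokes approximation is assembled from harmonic approximations plus curl and divergence corrections, isolating the two topological obstructions explicitly; your route is shorter and self-contained, and your observation that cut-off Stokeslets alone do not suffice --- that the potential sources $\chi\nabla E(\cdot-x_0)$ must be added to force $q$ to vanish on the bounded components of $\R^N\setminus\overline\Omega$ --- is precisely where the paper's $\mathcal{H}^2$ correction reappears in dual form. It is worth noting that the paper's own alternative approach in Section~4 (Theorem~\ref{th:dens}) is itself a duality argument, though via adjoint Stokes problems and Fabre--Lebeau unique continuation rather than fundamental solutions and analytic continuation; your method is therefore closer in spirit to that section than to the proof in Section~3.
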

Now we state our main application of this result, namely an approximate Lagrangian controllability result for the Stokes equation. 
For that, we first recall some topological and geometrical notions.
\begin{definition}
A regular Jordan curve in $\BbR^2$ is the image of $\mathbb{S}^1$ by a diffeomorphism, and similarly a Jordan surface in $\BbR^3$ is the image of $\mathbb{S}^2$ by a diffeomorphism.
\end{definition}
According to the Jordan's theorem and the Jordan-Brouwer's theorem, the complement of a Jordan curve/surface $\gamma$ in $\BbR^{N}$, $N=2$ or $3$, defines two connected components; the bounded one will be denoted $\hbox{int}(\gamma)$. The exterior normal to Jordan curves/surfaces will be denoted by $\nu$ where $n$ will be preferred in the case of the boundary of $\Omega$. \par
\ \par
Also, we recall the following definitions.
\begin{definition}
Two Jordan curves $\gamma_0$ and $\gamma_1$ are said to be homotopic in $\Omega$, if there exists a continuous map $h:[0,1]\times \mathbb{S}^1\to \Omega$ such that 
$h(0,\cdot)$ is a parameterization of $\gamma_0$ and $h(1,\cdot)$ is a parameterization of $\gamma_1$.
\end{definition}
\begin{definition}
Two Jordan surfaces $\gamma_0$ and $\gamma_1$ embedded in $\R^3$ are said to be isotopic in $\Omega$, if there exists a continuous map $h:[0,1]\times \S^2\to \Omega$ (called isotopy) such that $h(0)=\gamma_{0}$, $h(1)=\gamma_{1}$ and for each $t\in [0,1],\ h(t,\cdot)$ is an homeomorphism of $\S^2$ into its image. When, for some $k\in \N \setminus \{0\}$, this homeomorphism is a $C^{k}$-diffeomorphism with respect the space variable, $h$ will be said to be a $C^k$-isotopy, or, when $k=\infty$, a smooth isotopy. 
\end{definition}
Now we introduce the following assumption.
\begin{definition}
 We will say that $\gamma_0$ and $\gamma_1$ two Jordan curves/surfaces satisfy the property $\mathfrak{P}$ if, when $N=2$, $\gamma_0$ and $\gamma_1$ are homotopic in $\Omega$, and if, when $N=3$, $\gamma_0$ and $\gamma_1$ are isotopic in $\Omega$. 	
\end{definition}
Note that when $N=2$, two homotopic Jordan curves in $\Omega$ are in fact isotopic in $\Omega$, see \cite{epstein}. \par
\ \par
We will also use the following notation for a flow: given a suitably regular vector field $u$, we denote by $\phi^u$ the flow of $u$, defined (when possible) by 
\begin{equation} \label{eq:flot}
\partial_t\phi^u(t,s,x) = u(t,\phi^u(t,s,x)) \ \text{ and } \ u(s,s,x)=x.
\end{equation}
\ \par
Our result is then as follows.
\begin{theorem} \label{th:2d3d}
Let $\Omega$ be a smooth bounded open connected set in $\R^{N}$ and $\Sigma$ a nonempty open part of $\partial \Omega$.
If $\gamma_0$ and $\gamma_1$ are smooth Jordan curves/surface in $\Omega$ satisfying the property $\mathfrak{P}$, then, for any $\eps>0$, $k\in \BbN^*$, $\alpha\in (0,1)$, there exists a solution $(u,p) \in C^{\infty}([0,1] \times \overline{\Omega};\R^{N+1})$ of \eqref{eq:stokes2} such that
\begin{subequations}
\begin{align}
\label{eq:approxlagrcontr2d}
\| \phi^u(1,0,\gamma_0) - \gamma_1 \|_{C^{k,\alpha}(\mathbb{S}^{N-1})} \leq \eps, \\
\label{eq:approxlagrcontr2d.1}
\forall t \in [0,1], \ \phi^u(t,0,\gamma_0)\subset \Omega.
\end{align}
\end{subequations}
\end{theorem}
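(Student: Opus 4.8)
The plan is to reduce the Lagrangian controllability statement to the approximation result of Theorem~\ref{Thm:RWforS} by a homotopy/isotopy argument, following the strategy already used in \cite{GlHo08,GlHo10} for the Euler equation but now in the much simpler quasi-static Stokes setting. First I would fix a smooth isotopy (in the case $N=3$; homotopy in the case $N=2$, which by \cite{epstein} can be upgraded to an isotopy) $h:[0,1]\times\S^{N-1}\to\Omega$ between $\gamma_0$ and $\gamma_1$, and turn it into a smooth non-autonomous velocity field: after reparametrizing $h$ so that it is stationary near $t=0$ and $t=1$, one can differentiate $h(t,\cdot)$ in time and express the result, via the inverse of $h(t,\cdot)$, as a smooth time-dependent field $w(t,x)$ defined in a neighborhood $\mathcal{V}$ of $\bigcup_{t}h(t,\S^{N-1})$ whose flow carries $\gamma_0$ to $\gamma_1$ exactly. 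The first technical point is to arrange that $w$ is divergence-free: this is a standard construction (thicken the moving hypersurface into a moving tubular family of domains of prescribed volume, use that $\gamma_0$ and $\gamma_1$ bound regions of equal volume by the incompressibility constraint, and take a divergence-free extension), so that $w(t,\cdot)$ is a smooth divergence-free field on a neighborhood of $\gamma_t:=h(t,\S^{N-1})$.

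Next I would apply the Stokes approximation theorem. The field $w$ is not a Stokes solution, so for each fixed $t$ I would first approximate $w(t,\cdot)$ on the compact set $K_t$ (a closed tubular neighborhood of $\gamma_t$) by a Stokes solution. One must be slightly careful here: Theorem~\ref{Thm:RWforS} approximates a Stokes solution, not an arbitrary divergence-free field, so one should instead argue that on a neighborhood of $K_t$ the field $w$ can be corrected by a smooth Stokes solution that is $\eps$-close to $w$ on $K_t$ — concretely, one solves a Stokes system with suitable data, or one notes that the relevant obstruction to approximating $w$ itself by Stokes solutions is encoded in the pressure term and can be absorbed. This is precisely the role of the "weak variant of Runge--Walsh" advertised in the abstract. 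Having obtained, for each $t$, a Stokes solution $\tilde u(t,\cdot)$ on $\mathcal{V}$ close to $w(t,\cdot)$ on $K_t$, I would apply Theorem~\ref{Thm:RWforS} with the open set $\Omega$ playing the role of its "$\Omega$" and with the topological hypothesis "each connected component of $\R^N\setminus K_t$ meets the interior of $\R^N\setminus\Omega$" — this holds because $K_t$ is a thin tubular neighborhood of an embedded sphere/curve sitting inside $\Omega$, so $\R^N\setminus K_t$ has the bounded component $\mathrm{int}(\gamma_t)$, which contains nothing of $\Omega^c$, so in fact one needs the slightly stronger statement that one may still approximate; this forces $\Omega$ in the theorem to be taken larger than our fluid domain, and the resulting $\overline u$ is compactly supported in $\R^N$, hence in particular smooth up to $\overline\Omega$ and a Stokes solution there. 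One then checks the $t$-dependence is smooth (the construction is continuous in the data, and can be smoothed by a partition of unity / mollification in $t$).

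Finally I would assemble the control: set $u$ to be the resulting smooth time-dependent Stokes solution on $[0,1]\times\overline\Omega$, satisfying $-\Delta u+\nabla p=0$, $\div u=0$ on $\Omega$, with no constraint imposed on $\partial\Omega$ — in particular $u|_\Sigma$ is whatever it is, which is exactly the control, while the requirement $u=0$ on $\partial\Omega\setminus\Sigma$ is \emph{not} automatically satisfied and must be restored. To restore it I would either localize the support of $\overline u$ away from $\partial\Omega\setminus\Sigma$ (possible since $\overline u$ is compactly supported and $\Omega^c$ has interior points near which the support can be pushed, provided $\gamma_0,\gamma_1$ together with the isotopy stay away from $\partial\Omega\setminus\Sigma$ — which can be arranged since $\Sigma$ is open nonempty and one can deform within $\Omega$), or cut off and correct by an additional Stokes corrector supported near the boundary. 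Then by Gronwall-type continuous dependence of flows on the velocity field, the flow $\phi^u(\cdot,0,\gamma_0)$ stays $C^{k,\alpha}$-close to the flow of $w$, hence stays in $\Omega$ for all $t\in[0,1]$ and sends $\gamma_0$ to within $\eps$ of $\gamma_1$ in $C^{k,\alpha}(\S^{N-1})$, giving \eqref{eq:approxlagrcontr2d}--\eqref{eq:approxlagrcontr2d.1}. The main obstacle, I expect, is the second step: controlling the approximation of a general divergence-free field (the isotopy-generated $w$) by genuine Stokes solutions uniformly in $t$ and up to the boundary, i.e.\ making the "weak Runge--Walsh for Stokes" do exactly the work needed here, together with the bookkeeping needed to keep $u$ vanishing on $\partial\Omega\setminus\Sigma$ while remaining a Stokes solution.
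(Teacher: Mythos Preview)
Your overall architecture --- build a divergence-free model flow carrying $\gamma_0$ to $\gamma_1$, approximate it time-slice by time-slice by Stokes solutions via Theorem~\ref{Thm:RWforS}, glue with a partition of unity in $t$, and conclude by Gronwall --- matches the paper's. But the second step, exactly where you flag the ``main obstacle'', has a genuine gap, and the mechanism the paper uses to close it is different from what you sketch.

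The difficulty is that a general smooth divergence-free field $w(t,\cdot)$ near $\gamma_t$ \emph{cannot} be approximated in $C^k$ on a neighborhood of $\gamma_t$ by Stokes solutions: for any Stokes pair one has $\Delta(\curl u)=0$, whereas $\curl w$ is arbitrary. Your suggestions to ``solve a Stokes system with suitable data'' or ``absorb the obstruction into the pressure'' do not get around this rigidity.

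The paper bypasses the problem by observing that, to control the flow of $\gamma_0$, one only needs to match $X$ \emph{on} $\gamma_t$, not on a neighborhood. So for each $t$ it solves the Stokes boundary value problem on $\widetilde{\Omega}(\gamma_t)$ (essentially $\mathrm{int}(\gamma_t)$, possibly intersected with $\Omega$) with data $u=X$ on $\gamma_t$ and $u=0$ on the rest of the boundary. This gives a genuine Stokes solution $u(t,\cdot)$ with $u(t,\cdot)|_{\gamma_t}=X(t,\cdot)|_{\gamma_t}$, so that $\phi^{u}(t,0,\gamma_0)=\phi^{X}(t,0,\gamma_0)$. To apply Theorem~\ref{Thm:RWforS} one then needs $u(t,\cdot)$ on an open neighborhood of $\overline{\widetilde{\Omega}(\gamma_t)}$; this is obtained by analytic continuation across $\gamma_t$, using Agmon--Douglis--Nirenberg regularity for Stokes up to an analytic boundary (Proposition~\ref{prop:ext}). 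That in turn forces $\gamma_t$ and $X$ to be real-analytic, so the paper first proves the result when $\gamma_0$ and $X$ are $C^\omega$, and then reduces the smooth case to this one via Whitney approximation (of $\gamma_0$ by nearby analytic surfaces, and of $X$ by the curl of an analytic approximation of a vector potential). This analytic reduction is entirely absent from your outline, and without it the approximation step does not go through.

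A side benefit of the paper's choice of $K=\overline{\widetilde{\Omega}(\gamma_t)}$ (a filled region rather than a thin tube) is that your topological worry disappears: $\R^N\setminus K$ is connected and unbounded, so the hypothesis of Theorem~\ref{Thm:RWforS} holds. The boundary condition $u=0$ on $\partial\Omega\setminus\Sigma$ is handled either directly through Theorem~\ref{th:dens}, or by including $\partial\Omega\setminus\Sigma$ in $K$, approximating the function equal to $u(t,\cdot)$ near $\gamma_t$ and $0$ near $\partial\Omega\setminus\Sigma$, and subtracting a small Stokes corrector.
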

What we mean by \eqref{eq:approxlagrcontr2d}, is that we can find a parameterization of $\gamma_0$ and $\gamma_1$ such that \eqref{eq:approxlagrcontr2d.1} holds. Naturally, for this quasi-static model, the time of controllability $T$ plays no role, and one can reduce the situation to $T=1$ by a simple change of time variable. 
\begin{remark}

If $\gamma_0$ is a real analytic surface, and if for all $t\in [0,1]$, $\phi^u(t,0,\gamma_0)\subset \Omega$, then by using the regularity of the solutions of the Stokes system, one deduces that for all $t \in [0,1]$, $\phi^u(t,0,\gamma_0)$ is also real analytic for all $t$. Thus exact Lagrangian controllability does not hold in general in this framework (pick $\gamma_{1}$ smooth but not analytic).
\end{remark}
%
%
%
%
%
%
%
%
%

\section{Proof of Theorem \ref{Thm:RWforS}}
In this section, we establish Theorem~\ref{Thm:RWforS}. \par
\ \par
\noindent
{\bf Step 0. Geometrical simplification.} In a first step, we show how to reduce to the case where $K$ and ${\mathcal V}$ have smooth boundaries and $\Omega$ is of the form $B(0,R) \setminus \cup_{i=1}^{n} B(A_{i},r_{i})$ with large $R>0$ and small $r_{i}>0$. \par
\ \par
\noindent
$\bullet$ Since $\Omega$ is bounded, it is included in some ball $B(0,R/2)$, $R$ large enough.
In each connected component of $B(0,R) \setminus K$, we pick a unique point $A_{i} \in \overset{\circ}{\R^{N} \setminus \Omega}$ (which is clearly possible due to the assumption). 
We introduce $r_{i}>0$ such that $\overline{B}(A_{i},r_{i}) \subset \R^{N} \setminus \Omega$. In particular, note that $B(A_{i},r_{i})$ does not meet ${\mathcal V}$. 
Of course, since each of such connected component contains a point with rational coordinates, the family $(A_{i})$ is an at most countable.
Clearly one has $\Omega \subset \tilde{\Omega} := B(0,R) \setminus \cup_{i} \overline{B}( A_{i},r_{i})$.
Finally, we add to $K$ all the connected components of $B(0,R) \setminus K$ included in ${\mathcal V}$: call $\tilde{K}$ the resulting set which is still compact (it is bounded and contains its boundary). Now if we can establish the result on $\tilde{K}$, ${\mathcal V}$ and $\tilde{\Omega}$ (that is, starting from a solution in ${\mathcal V}$, approximate it on $\tilde{K}$ by a solution in $\tilde{\Omega}$), it is all the more true on $K$, ${\mathcal V}$ and $\Omega$. \par
\ \par
\noindent
$\bullet$ We claim that $B(0,R) \setminus \tilde{K}$ has a finite number of connected components. If not, consider the subsequence of the points $A_{i}$ associated to the connected components in $B(0,R) \setminus \tilde{K}$. Write it $(A_{i})_{i \in \N}$ again. Then up to a subsequence that we still denote $(A_{i})$, one has $A_{i} \rightarrow A$ as $i \rightarrow +\infty$. Then $A$ cannot belong to a connected component of $B(0,R) \setminus \tilde{K}$, since otherwise, there would be a whole ball around $A$ included in this connected component, and each of these connected components is only visited once by the sequence $A_{i}$. It follows that $A \in \tilde{K} \subset {\mathcal V}$. This means that for $i$ large, $A_{i} \in {\mathcal V}$ which contradicts the fact that $B(A_{i},r_{i})$ does not meet ${\mathcal V}$. \par
We call $A_{1}, \dots, A_{n}$  the remaining points $A_{i}$ corresponding to the connected components of $B(0,R) \setminus \tilde{K}$. \par
\ \par
\noindent
$\bullet$
Now by a celebrated theorem of Whitney, there exists a function $\varphi \in C^{\infty}(\overline{B}(0,R);\R)$ such that $\tilde{K}= \varphi^{-1}(\{0\})$. Replacing $\varphi$ by $\varphi^{2}$ is necessary, we may assume that $\varphi \geq 0$. Due to the compactness of $\tilde{K}$, we have for some $c>0$:
\begin{equation*}
\varphi \geq c \ \text{ in } \ \overline{B}(0,R) \setminus {\mathcal V}.
\end{equation*}
Consider $\delta \in [0,c)$.
Due to Sard's theorem, there exists regular values $\lambda$, $\mu$ and $\nu$ of the function $\varphi$ respectively in in $[\delta/10, \delta /5]$, $[3 \delta/10,2 \delta/5]$ and $[\delta/2,3 \delta/10]$. Call $K_{\delta}:= \varphi^{-1}([0,\lambda])$, ${\mathcal V}^{1}_{\delta}:=\varphi^{-1}([0,\mu))$ and ${\mathcal V}_{\delta}:=\varphi^{-1}([0,\nu))$. All the three subsets have consequently smooth boundaries. Furthermore, reducing $\delta$ if necessary, we can ensure that $\cup_{i=1}^{n} \overline{B}(A_{i}, r_{i}) \cap {\mathcal V}_{\delta} = \emptyset$.
Moreover:
\begin{itemize}
\item if there are connected components of $K_{\delta}$ that do not meet $K$, remove them from $K_{\delta}$,
\item if there are connected components of $B(0,R) \setminus {\mathcal V}^1_{\delta}$ and $B(0,R) \setminus {\mathcal V}_{\delta}$ that do not contain a point $A_{i}$, add them to ${\mathcal V}^1_{\delta}$ (resp. ${\mathcal V}_{\delta}$).
\end{itemize}
Note that since ${\mathcal V}^{1}_{\delta}$ and ${\mathcal V}_{\delta}$ are included in ${\mathcal V}$, a connected component of $B(0,R) \setminus {\mathcal V}^1_{\delta}$ (resp. $B(0,R) \setminus {\mathcal V}_{\delta}$) either contains a connected component of $B(0,R) \setminus {\mathcal V}$ (and hence a point $A_{i}$) or is included in ${\mathcal V}$. Consequently ${\mathcal V}^1_{\delta}$ and ${\mathcal V}_{\delta}$ obtained in this way are included in ${\mathcal V}$. \par
Now if we can establish the result on $K_{\delta}$, ${\mathcal V}_{\delta}$ and $\hat{\Omega}:= B(0,R) \setminus \cup_{i=1}^{n} \overline{B}(A_{i}, r_{i})$, it is all the more true on $K$, ${\mathcal V}$ and $\Omega$. From now on, we write $K$, ${\mathcal V}$ and $\Omega$ for $K_{\delta}$, ${\mathcal V}_{\delta}$ and $\hat{\Omega}$, to simplify the notation. Note in passing that we have obtained ${\mathcal V}_{1}$ (the new notation for ${\mathcal V}^{1}_{\delta}$) with smooth boundary so that
\begin{equation*}
K \subset {\mathcal V}_{1} \subset \overline{{\mathcal V}_{1}} \subset {\mathcal V} \subset \overline{{\mathcal V}} \subset \Omega.
\end{equation*}
\ \par
Let us now go back to the proof of Theorem~\ref{Thm:RWforS}. We consider $(u,p)$ a smooth solution of \eqref{Eq:StokesBR} on ${\mathcal V}$, and want a solution of \eqref{Eq:StokesBR} on $\Omega$ approximating well $u$ on $K$. We follow several consecutive steps. In the sequel, $C>0$ is a constant that may change from line to line and depends on the geometry, but not on the function $u$ or $\varepsilon$. \par
\ \par
\noindent
{\bf Step 1.} First we introduce an extension $\tilde{p} \in C^{\infty}_{c}(\Omega;\R)$ of $p_{|\overline{{\mathcal V}}}$. Next we define
\begin{equation} \label{Eq:DefHatu}
\hat{u}:= u - \nabla \Delta^{-1} \tilde{p} \ \text{ in } \ {\mathcal V},
\end{equation}
where $\Delta^{-1} := \cdot * G$ is the convolution operator with the fundamental solution associated to the Laplacian, that is,
\begin{equation*}
G(x) = \frac{1}{2\pi} \ln |x| \text{ if } N=2, \ \ G(x) = -\frac{1}{4\pi |x|} \text{ if } N=3.
\end{equation*}
Then one can easily check that
\begin{equation} \label{Eq:PropHatu}
\curl \hat{u} = \curl u \ \text{ in } {\mathcal V}, \ \ \ 
\Delta \hat{u} = 0 \ \text{ in } {\mathcal V}.
\end{equation}
Let $\gamma_{1}$, \dots, $\gamma_{g}$ a (minimal) family of smooth loops generating the fundamental group of ${\mathcal V}_{1}$.
One has moreover
\begin{equation} \label{Eq:PropHatu2}
\oint_{\gamma_{i}} (\hat{u} - u) \cdot \tau =0, \  \ \forall i =1, \dots,g.
\end{equation}
\ \par
\noindent
{\bf Step 2.} Next we use Walsh's theorem on $\hat{u}$ which is harmonic in ${\mathcal V}$. We have points $A_{1},\dots, A_{n}$ outside of $\Omega$ in each connected component of $\R^{N} \setminus {\mathcal V}_{1}$. Hence we obtain a harmonic function $\tilde{u}$, defined in $\R^{N} \setminus \{ A_{1},\dots, A_{n} \}$ and such that:
\begin{eqnarray}
\label{Eq:uepsilon1}
\Delta \tilde{u} =0 \text{ in } \R^{N} \setminus \{ A_{1}, \dots, A_{n}\} , \\
\label{Eq:uepsilon2}
\| \hat{u} - \tilde{u} \|_{C^{k+1,\alpha}(\overline{{\mathcal V}_{1}})} \leq \varepsilon,
\end{eqnarray}
where we fixed some $\alpha \in (0,1)$.
In particular we infer with \eqref{Eq:DefHatu} that for some $C>0$,
\begin{equation} \label{Eq:uepsilon3}
\left| \oint_{\gamma_{i}} (u - \tilde{u}) \cdot \tau \right| \leq C \varepsilon \ \text{ and } \ 
\| \curl u - \curl \tilde{u} \|_{C^{k,\alpha}(\overline{{\mathcal V}_{1}})} \leq C \varepsilon.
\end{equation}
\ \par
\noindent
{\bf Step 3.} Now we extend $\curl (u - \tilde{u})_{|\overline{{\mathcal V}_{1}}}$ to a function $w \in C^{k,\alpha}_{c}(\R^{N})$ such that 
\begin{equation*}
\| w \|_{C_{c}^{k,\alpha}(\R^{N})} \leq C \varepsilon.
\end{equation*}
Now, for instance by using Biot-Savart's formula, there exists $v \in C^{k+1,\alpha}(\R^{N})$ such that for a constant $C>0$ depending on $R$:
\begin{equation*}
\curl v = w, \ \div v=0, \ \ \| v \|_{C^{k+1,\alpha}(\overline{B}(0,R))} \leq C \varepsilon.
\end{equation*}
Then $\curl v= \curl (u - \tilde{u})$ on ${\mathcal V}_{1}$, so one has
\begin{equation*}
v = u - \tilde{u} + \nabla \theta + H,
\end{equation*}
for some $\theta \in C^{k+2,\alpha}({\mathcal V}_{1};\R)$ and $H$ some function in the first de Rham (tangent) cohomology space of ${\mathcal V}_{1}$ which one can represent in terms of vector fields by
\begin{equation*}
{\mathcal H}^{1}({\mathcal V}_{1}) := \{ u \in C^{\infty}(\overline{\mathcal V}_{1};\R^{N}) \ / \ \curl u=0 \text{ and } \div u =0 \text{ in } {\mathcal V}_{1}, \ u \cdot n = 0 \text{ on } \partial {\mathcal V}_{1} \}.
\end{equation*}
See for instance \cite[Appendix I]{temam09}. This vector space is of finite dimension $g$. Its basic property is: any $\curl$-free vector field in ${\mathcal V}_{1}$ is the sum of a gradient and an element of ${\mathcal H}^{1}({\mathcal V}_{1})$. \par
We extend $\theta$ to some function in $C^{k+2,\alpha}_{c}(\R^{N};\R)$ arbitrarily.
In particular, we have
\begin{equation} \label{Eq:uepsilon4}
\| u - \tilde{u} +\nabla \theta + H \|_{C^{k+1,\alpha}({\mathcal V}_{1})} \leq C \varepsilon.
\end{equation}
From \eqref{Eq:uepsilon3} and  \eqref{Eq:uepsilon4} we deduce that for all $i=1,\dots,g$, one has
\begin{equation} \label{Eq:H1}
\left| \oint_{\gamma_{i}} H \cdot \tau \right| \leq C \varepsilon.
\end{equation}
Since on ${\mathcal H}^{1}({\mathcal V}_{1})$ the map $H \mapsto \left(\oint_{\gamma_{i}} H \cdot \tau \right)_{i=1,\dots,g}$ is an isomorphism, we deduce finally that
\begin{equation*}
\| H \|_{C^{k+1,\alpha}({\mathcal V}_{1})} \leq C \varepsilon,
\end{equation*}
and consequently that
\begin{equation} \label{H2}
\| u - \tilde{u} +\nabla \theta \|_{C^{k+1,\alpha}({\mathcal V}_{1})} \leq C \varepsilon.
\end{equation}
\ \par
\noindent
{\bf Step 4.} Now we claim that there exists $\beta \in C^{k+1,\alpha}(\Omega)$ such that
\begin{gather} \label{Eq:Beta1}
\| \nabla \beta \|_{C^{k,\alpha}(K)} \leq C \varepsilon, \\
\label{Eq:Beta2}
\Delta \beta = \div (\tilde{u} - \nabla \theta) \text{ in } \Omega.
\end{gather}
To prove this we reason as in Step~3 with $\mbox{div}$ replacing $\mbox{curl}$. Namely, we extend $\div (\tilde{u} - \nabla \theta)_{|\overline{{\mathcal V}_{1}}}$ to some function $\rho \in C^{k,\alpha}_{c}(\R^{N})$ in such a way that
\begin{equation*}
\| \rho \|_{C^{k,\alpha}_{c}(\R^{N})} \leq C \| \div (\tilde{u} - \nabla \theta)_{|\overline{{\mathcal V}_{1}}} \|_{C^{k,\alpha}({\mathcal V}_{1})}.
\end{equation*}
Note that due to \eqref{H2} and since $u$ satisfies \eqref{Eq:StokesOmega} on ${\mathcal V}$, the right hand side is of order ${\mathcal O}(\varepsilon)$. Now there is a vector field $v_{\rho} \in C^{k+1,\alpha}(B(0,R))$ such that
\begin{equation*}
\div v_{\rho}= \rho \text{ in } B(0,R) \ \text{ and } \ \| v_{\rho} \|_{C^{k+1,\alpha}(B(0,R))} \leq C \| \rho \|_{C^{k,\alpha}_{c}(\R^{N})} \leq C \varepsilon.
\end{equation*}
Note that
\begin{equation} \label{Eq:Divvvrho}
\div (\tilde{u} - \nabla \theta - v_{\rho}) = 0 \text{ in } {\mathcal V}_{1}.
\end{equation}
We suppose here $N=3$. One can adapt the proof mutatis mutandis to $N=2$. 

Call $\Gamma_{1}, \dots, \Gamma_{K}$ the various connected components of $\partial {\mathcal V}_{1}$. Taking \eqref{Eq:Divvvrho} into account and due to the fact that each connected component of $\R^{N} \setminus {\mathcal V}_{1}$ meets at least a point $A_{i}$, we deduce that there exist $\lambda_{1}, \dots, \lambda_{n}$ so that
\begin{equation*}
\forall i=1, \dots , K, \ \  \int_{\Gamma_{i}} \left( \tilde{u} - \nabla \theta - v_{\rho} - \sum_{j=1}^{n} \lambda_{j} \frac{x-A_{j}}{|x-A_{j}|^{3}}\right) \cdot n =0.
\end{equation*}
We write
\begin{equation*}
h:= \sum_{j=1}^{n} \lambda_{j} \frac{x-A_{j}}{|x-A_{j}|^{3}}.
\end{equation*}
The functions defining $h$ are related to the {\it second} de Rham cohomology space of ${\mathcal V}_{1}$, denoted ${\mathcal H}^{2}({\mathcal V}_{1})$, which is a finite dimensional space and which can also be described as in \cite[Appendix I]{temam09}. The basic property of this space is: any divergence-free vector field can be written as a sum of a $\curl$ and an element of ${\mathcal H}^{2}({\mathcal V}_{1})$.
Now on ${\mathcal V}_{1}$, one has $\div(\tilde{u} -\nabla \theta-v_{\rho} -h)=0$ and 
\begin{equation*}
\int_{\Gamma} \left( \tilde{u} - \nabla \theta - v_{\rho} - h \right) \cdot n =0,
\end{equation*}
on any closed surface $\Gamma \subset {\mathcal V}_{1}$. It follows that that $\tilde{u} -\nabla \theta -v_{\rho}$ can be written in the form
\begin{equation} \label{Eq:VMVR}
\tilde{u} -\nabla \theta -v_{\rho} = \curl B + h \text{ in } {\mathcal V}_{1},
\end{equation}
for some $B \in C^{k+2,\alpha}({\mathcal V}_{1})$. In particular one has
\begin{equation} \label{Eq:EstB}
\| \tilde{u} -\nabla \theta -\curl B - h \|_{C^{k+1,\alpha}({\mathcal V}_{1})} = \| v_{\rho} \|_{C^{k+1,\alpha}(B(0,R))} \leq C \varepsilon.
\end{equation}
Now we extend $(\tilde{u} -\nabla \theta -\curl B - h)_{|{\mathcal V}_{1}}$ into some $\hat{U} \in C^{k+1,\alpha}(\Omega;\R^{3})$ and $B$ in some $\tilde{B} \in C^{k+2,\alpha}_{c}(B(0,R);\R^{3})$ in such a way that
\begin{equation} \label{Eq:EstHatw}
\| \hat{U} \|_{C^{k+1,\alpha}(\Omega)} \leq C \| \tilde{u} -\nabla \theta -\curl B - h \|_{C^{k+1,\alpha}({\mathcal V}_{1})}.
\end{equation}
We define
\begin{equation*}
\tilde{U}:=  \hat{U} + \curl \tilde{B} + h \text{ in } \Omega. 
\end{equation*}
We introduce $\tilde{q}$ as the solution in $\Omega$ of
\begin{equation*}
-\Delta \tilde{q} = \div \tilde{U} = \div \hat{U} \text{ in } \Omega, \ \ \tilde{q}=0 \text{ on } \partial \Omega,
 \end{equation*}
and $q$ as the solution in $\Omega$ of
\begin{equation*}
-\Delta q = \div (\tilde{u} - \nabla \theta) \text{ in } \Omega, \ \ \tilde{q}=0 \text{ on } \partial \Omega.
\end{equation*}
Note that thanks to \eqref{Eq:EstB} and \eqref{Eq:EstHatw}, we have
\begin{equation} \label{Eq:EstTildeq}
\| \tilde{q} \|_{C^{k+1,\alpha}(\Omega)} \leq C \varepsilon.
\end{equation}
Clearly, $\tilde{q}-q$ is harmonic on ${\mathcal V}_{1}$, so by Walsh's theorem one find some harmonic function $\psi$ in $\R^{N} \setminus \{ A_{1}, \dots, A_{n} \}$ such that
\begin{equation} \label{Eq:Estqqtilde}
\| \tilde{q}-q -\psi \|_{C^{k+1,\alpha}(K)} \leq \varepsilon.
\end{equation}
We let
\begin{equation*}
\beta := q-\psi.
\end{equation*}
One can easily check that \eqref{Eq:Beta2} holds, and \eqref{Eq:Beta1} is a consequence of \eqref{Eq:EstTildeq}, \eqref{Eq:Estqqtilde} and
\begin{equation*}
\| \beta \|_{C^{k+1,\alpha}(K)} \leq \| \tilde{q}-q -\psi \|_{C^{k+1,\alpha}(K)} + \| \tilde{q} \|_{C^{k+1,\alpha}(K)}.
\end{equation*}
\ \par
\noindent
{\bf Conclusion.} We define
\begin{equation*}
\overline{u} := \tilde{u} -\nabla \theta -\nabla \beta.
\end{equation*}
Due to \eqref{H2} and \eqref{Eq:Beta1} one clearly has
\begin{equation*}
\| u - \overline{u} \|_{C^{k,\alpha}(K)} \leq C \varepsilon.
\end{equation*}
Moreover
\begin{equation*}
\div \overline{u} =0 \text{ in } \Omega
\end{equation*}
follows from \eqref{Eq:Beta2}. Finally one has
\begin{equation*}
\Delta \overline{u} = - \nabla (\Delta(\theta + \beta)) \text{ in } \Omega,
\end{equation*}
thanks to \eqref{Eq:uepsilon1}. This concludes the proof of Theorem~\ref{Thm:RWforS}. \par
%
%

%
%
%
%
%
%
%
\section{An alternative approach} 
The above proof relies on the aforementioned Walsh theorem (Theorem~\ref{th:approxharm1}). The proof of Walsh's theorem is not really constructive; at least it seems difficult to imagine a numerical scheme which could be inherited from the proof. To take this into account, we propose another strategy for the subsequent application, based on a density argument which we hope can be used more easily for an approximation scheme. Of course, this density result can also be proved using Theorem~\ref{Thm:RWforS}. \par
We first introduce some definitions and notations. 
For $\Sigma$ a nonempty open part of the boundary of some regular open set $U$, we consider
%
$$H^{1/2}_m(\Sigma):=\{\phi \in H^{1/2}(\partial U;\R^{N}), \ \mbox{Supp\,} \phi \subset \Sigma, \ \int_\Sigma \phi \cdot n \, d\sigma=0 \}.$$
We also define
$$H^{-1/2}_m(\Sigma)=(H^{1/2}_m(\Sigma))'.$$
It is not difficult to see that for any $w \in H^{-1/2}_m(\Sigma)$, there is a unique $\tilde{w} \in H^{-1/2}(\Sigma;\R^{3})$ with $\langle \tilde{w} , n \rangle_{H^{-1/2} \times H^{1/2}} =0$ such that for any $u \in H^{1/2}_m(\Sigma)$, $\langle w, u \rangle = \langle \tilde{w} , u \rangle_{H^{-1/2} \times H^{1/2}}$. \par
%
%
%
%
%
The density result that we aim at proving is the following.
\begin{theorem} \label{th:dens}
Assume that $\gamma$ is a $C^\infty$ Jordan surface included in $\Omega$, then the set
\begin{equation*}
\big\{u_{|\gamma}, \ (u,p) \text{ is a solution to \eqref{eq:stokes} such that } u_{|\partial \Omega}\in H^{1/2}_m(\Sigma) \big\},
\end{equation*}
is dense in $H^{1/2}_m(\gamma)$.
\end{theorem}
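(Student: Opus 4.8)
The plan is to prove Theorem~\ref{th:dens} by a duality (Hahn--Banach) argument, reducing the density statement to a unique-continuation property for an adjoint Stokes system, and then to invoke the approximation result Theorem~\ref{Thm:RWforS} (or, in the spirit of the "alternative approach" section, a direct Runge-type argument) to close the gap. First I would fix the space $X := H^{1/2}_m(\gamma)$ and let $E \subset X$ denote the set of traces $u_{|\gamma}$ of solutions $(u,p)$ of \eqref{eq:stokes} with $u_{|\partial\Omega}\in H^{1/2}_m(\Sigma)$; one checks $E$ is a linear subspace (the Stokes problem with such boundary data is well posed, so the trace map is well defined and linear), and by well-posedness the map $u_{|\partial\Omega}\mapsto u_{|\gamma}$ is continuous. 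To show $\overline{E}=X$ it suffices, by Hahn--Banach, to show that any $\ell \in X' = H^{-1/2}_m(\gamma)$ annihilating $E$ is zero. So let $w \in H^{-1/2}_m(\gamma)$ with $\langle w, u_{|\gamma}\rangle = 0$ for every such solution.

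The second step is to represent the functional $w$ via an adjoint state. Let $\tilde w \in H^{-1/2}(\gamma;\R^N)$ with $\langle \tilde w, \nu\rangle = 0$ be the representative of $w$ described just before the statement. Introduce the solution $(v,q)$ of the Stokes system in $\Omega \setminus \gamma$ (or rather in both $\mathrm{int}(\gamma)$ and $\Omega\setminus\overline{\mathrm{int}(\gamma)}$) having $\tilde w$ as the jump of its normal stress (Cauchy stress) $\sigma(v,q)\nu := (\nabla v + \nabla v^T)\nu - q\nu$ across $\gamma$, with $v$ continuous across $\gamma$, and with $v=0$ on $\partial\Omega$. This is the classical single-layer-type construction; the homogeneous Dirichlet condition on $\partial\Omega$ makes $(v,q)$ unique (up to the usual normalization of $q$). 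The integration-by-parts / Green's formula for Stokes then gives, for any solution $(u,p)$ of \eqref{eq:stokes} smooth near $\gamma$,
\begin{equation*}
\langle w, u_{|\gamma}\rangle = \langle \tilde w, u_{|\gamma}\rangle = \int_{\partial\Omega} \big( u \cdot \sigma(v,q) n - v \cdot \sigma(u,p) n \big)\, d\sigma = -\int_{\Sigma} u \cdot \sigma(v,q) n \, d\sigma,
\end{equation*}
using $v_{|\partial\Omega}=0$ and $u_{|\partial\Omega}$ supported in $\Sigma$; the boundary terms on $\partial\Omega\setminus\Sigma$ vanish because $u$ vanishes there. Since this holds for all admissible $u$, i.e. all $u_{|\partial\Omega}$ ranging over a dense subset of $H^{1/2}_m(\Sigma)$, we deduce that the normal stress $\sigma(v,q)n$ is (proportional to) $n$ on $\Sigma$, hence together with $v=0$ on $\Sigma$ we get that $v$ has both vanishing Dirichlet and (modulo the pressure/normal part) Neumann-type data on the open piece $\Sigma$.

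The third step is the unique continuation: $(v,q)$ solving Stokes in the connected open set $\Omega\setminus\overline{\mathrm{int}(\gamma)}$ with zero Cauchy data on the nonempty open part $\Sigma\subset\partial\Omega$ must vanish identically there. This follows from unique continuation for the Stokes system (e.g. via the elliptic system satisfied by $(v,q)$, reducing to unique continuation from Cauchy data for the biharmonic/Laplace operators); one must be a little careful that the "zero Neumann data" is only up to a normal multiple of $n$, but combined with $v=0$ on $\Sigma$ and $\div v = 0$ this is enough to conclude $v\equiv 0$ on the outer component. Propagating up to $\gamma$ from outside gives that the outer trace of $v$ and of $\sigma(v,q)\nu$ on $\gamma$ both vanish. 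Then on $\mathrm{int}(\gamma)$, $v$ solves Stokes with zero Dirichlet data on $\gamma$ (continuity of $v$), so $v\equiv 0$ inside as well, whence the jump $\tilde w = [\sigma(v,q)\nu] = 0$, i.e. $w=0$. This yields $\overline{E}=X$.

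I expect the main obstacle to be Step three, the unique continuation from Cauchy data on $\Sigma$ for the Stokes system, together with the technical bookkeeping that the functional $w$ only controls the stress up to the ambiguity coming from the pressure's normal component and the mean-zero constraints defining $H^{\pm 1/2}_m$; one must verify these ambiguities are exactly absorbed by the normalization $\langle\tilde w,\nu\rangle=0$ and the condition $\int_\Sigma \phi\cdot n=0$ so that no spurious nonzero functionals survive. An alternative to the duality argument, more in line with the section heading, would be to argue directly: given $\phi\in H^{1/2}_m(\gamma)$, solve Stokes in $\mathrm{int}(\gamma)$ with datum $\phi$ to get $(u,p)$ defined near $\gamma$ from inside, then apply Theorem~\ref{Thm:RWforS} with $K$ a thin neighborhood of $\gamma$ (shrunk to $\gamma$) and $\Omega$ here playing the role of the "larger domain", obtaining a global Stokes solution close to $u$ on $\gamma$; finally correct its boundary trace on $\partial\Omega$ to land in $H^{1/2}_m(\Sigma)$ by solving an auxiliary Stokes problem, using that the correction is small and that $\Sigma$ is nonempty and open. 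This second route trades the unique-continuation difficulty for careful control of the boundary correction and the mean-value constraints, which is where its own main obstacle lies.
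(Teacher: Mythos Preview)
Your proposal is correct and follows essentially the same route as the paper: a Hahn--Banach duality argument, an adjoint Stokes problem carrying the annihilating functional, Green's formula to read off overdetermined Cauchy data on $\Sigma$, and then unique continuation for Stokes (the paper cites Fabre--Lebeau) to kill the adjoint state first outside $\gamma$ and then, by the Dirichlet problem, inside. The only cosmetic differences are that the paper sets up the adjoint state directly as the $H^1_0(\Omega)$ solution of $-\Delta v+\nabla q = T_\gamma$ with $T_\gamma\in H^{-1}(\Omega)$ the surface distribution (rather than your equivalent transmission/jump formulation), uses the operator $-\partial v/\partial n + q n$ in place of the symmetric stress, explicitly tracks the residual constant pressure in $\mathrm{int}(\gamma)$ to identify $T$ with a multiple of $w\mapsto\int_\gamma w\cdot\nu$ (exactly the ambiguity you flag and absorb into the normalization $\langle\tilde w,\nu\rangle=0$), and includes the short case split according to whether $\Sigma$ lies inside or outside $\gamma$.
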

\begin{proof}[Proof of Theorem~\ref{th:dens}]
We first assume that $\Sigma \cap \mbox{}^c\hbox{int}(\gamma)\neq \emptyset$.
Consider some $T\in H^{-1/2}(\gamma)$. To such $T$ we associate $T_{\gamma} \in {\mathcal D}'(\Omega)$ by
\begin{equation*}
\langle T_{\gamma}, \varphi \rangle_{{\mathcal D}'(\Omega) \times {\mathcal D}(\Omega)} := \langle T, \varphi_{|\gamma} \rangle_{H^{-1/2}(\gamma) \times H^{1/2}(\gamma)} .
\end{equation*}
It is not difficult to see that $T_{\gamma}$ is continuous on $H^{1}_{0}(\Omega)$, so that $T_{\gamma} \in H^{-1}(\Omega)$. 
Hence we can introduce $(v,q)\in H^1_0(\Omega)^N \times L^2(\Omega)$ the solution of 
\begin{subequations}\label{eq:adj}
\begin{align}
 -\Delta v + \nabla q & = T_{\gamma} \hbox{ in }\Omega, \label{eq:adj.1} \\
 \div v &= 0  \hbox{ in } \Omega, \\
 v & = 0 \hbox{ on }\partial \Omega, \\
 \int_\Omega q \, dx & = 0.
\end{align}
\end{subequations}
%
%
Consider now $u \in H^{1}(\Omega)$ a solution of \eqref{eq:stokes} with $u_{|\partial \Omega}\in H^{1/2}_m(\Sigma)$.
We perform the following computations:
%
%
\begin{eqnarray*}
T(u_{|\gamma}) &=& \int_\Omega (-\Delta v+\nabla q) \cdot u \, dx \\
&=& \int_\Omega -\Delta u \cdot v \, dx + \int_{\partial \Omega} \left(-\dfrac{\partial v}{\partial n}+q n\right) \cdot u \, d\sigma \\
&=& \int_{\partial \Omega} \left(-\dfrac{\partial v}{\partial n}+q n\right) \cdot u \, d\sigma
\end{eqnarray*}
We remark that the term $-\dfrac{\partial v}{\partial n}+q n$ in the right hand side of the last identity is defined in a classical way due to the local regularity of $(v,q)$ away from $\gamma$. \par
Now assume that there exists some $T\in H^{-1/2}(\gamma)$ such that 
$T(u_{|\gamma})=0$ for any $(u,p)$ solution of \eqref{eq:stokes} with $u_{|\partial \Omega}\in H^{1/2}_m(\Sigma)$. Then 
we can deduce that $-\dfrac{\partial v}{\partial n}+qn=0$ on $\Sigma$. 
Since moreover $v \in H^{1}_{0}(\Omega)$, by using the unique continuation property on the Stokes equation (see \cite{fabrelebeau} and \cite{bouleglogrand}) we deduce that $v=0$ and thus $q=0$ in $\Omega\setminus \hbox{int}(\gamma)$. \par
Since $v \in H^{1}(\Omega)$ and $q \in L^{2}(\Omega)$, we infer that $v_{|\hbox{int}(\gamma)}\in H_0^1(\hbox{int}(\gamma)\cap \Omega)$ and $q_{|\hbox{int}(\gamma)}\in L^2(\hbox{int}(\gamma)\cap \Omega)$. Since moreover one has
\begin{equation*}
-\Delta v+\nabla q=0 \ \text{ and } \ \div v =0 \  \text{ in } \mathcal{D}'(\hbox{int}(\gamma)\cap \Omega),
\end{equation*}
we can thus infer by uniqueness that $v=0$ and $q$ is constant in $\hbox{int}(\gamma)\cap \Omega$. Now if $q$ is a non null constant in $\hbox{int}(\gamma)\cap \Omega$ and while $q=0$ in $\Omega\setminus \hbox{int}(\gamma)$, this leads to the following form of $T$:
$$T(w)= c\int_\gamma w \cdot \nu \, d\sigma,$$
which means that $T=0$ as a member of $H^{-1/2}_{m}(\gamma)$. \par
To treat the case where $\Sigma \subset \hbox{int}(\gamma)$, we just have to exchange the role played by $\hbox{int}(\gamma)\cap \Omega$ and $\Omega\setminus \hbox{int}(\gamma)$.
This ends the proof of Theorem~\ref{th:dens}.
\end{proof}
%
%
%

%
%
%
%
%
%
\section{Proof of Theorem~\ref{th:2d3d}}
In this section, we establish Theorem~\ref{th:2d3d}. \par
We will use the following notation: for $\delta>0$ and $E$ a subset of $\BbR^p$, ${\mathcal V}_\delta(E)$ will denote a neighborhood of thickness $\delta$ of $E$:
\begin{equation*}
{\mathcal V}_{\delta}(E) = \{ x \in \Omega \ / \ d(x,E) < \delta \}.
\end{equation*}
In the sequel, when referring to smooth curves, surfaces or maps, we mean of class $C^\infty$. We will also use the equivalent real-analytic notions to which we refer by $C^\omega$. \par
\subsection{A model flow}
The first part of the proof consists in introducing a model flow, that is a solenoidal vector field that drives $\gamma_{0}$ to $\gamma_{1}$ exactly. But of course, it does not necessarily satisfy \eqref{eq:stokes2}. \par
For both cases $N=2,3$ the fact that $\gamma_0$ and $\gamma_1$ satisfy the property $\mathfrak{P}$ leads to the existence of a smooth isotopy preserving the volume given by the following theorem.
\begin{theorem}\label{th:exX}
Assume that $\gamma_0$ and $\gamma_1$ satisfy the property $\mathfrak{P}$, there exists $X\in C^\infty_0([0,1]\times \bar{\Omega}, \BbR^N)$ such that $\Div(X)=0$,
\begin{equation*}
\phi([0,1],0,\gamma_0)\subset \Omega \text{ and } \phi(1,0,\gamma_0)=\gamma_1.
\end{equation*}
\end{theorem}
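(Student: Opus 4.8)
The plan is to build $X$ in two stages: first produce a smooth (time-dependent) isotopy $\Psi_t$ of $\bar\Omega$ that fixes a neighborhood of $\partial\Omega$, equals the identity at $t=0$, carries $\gamma_0$ onto $\gamma_1$, and keeps $\Psi_t(\gamma_0)\subset\Omega$ for all $t$; then convert it into a \emph{volume-preserving} isotopy by composing with a correcting family of diffeomorphisms obtained from Moser's deformation trick; and finally read off $X$ as the (compactly supported, divergence-free) time-dependent velocity field generating the corrected flow.

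\textbf{Step 1: from property $\mathfrak P$ to an ambient isotopy.} In the case $N=3$, the hypothesis gives directly a $C^\infty$-isotopy $h:[0,1]\times\mathbb{S}^2\to\Omega$ from $\gamma_0$ to $\gamma_1$; in the case $N=2$ the homotopy hypothesis upgrades to a smooth isotopy by the cited result of Epstein \cite{epstein}. A smooth isotopy of submanifolds of $\Omega$ extends, by the isotopy extension theorem (Thom, Cerf, Palais; see e.g. Hirsch, \emph{Differential Topology}, Ch.~8), to an ambient smooth isotopy $\Psi\in C^\infty([0,1]\times\bar\Omega,\bar\Omega)$ with $\Psi_0=\mathrm{id}$, each $\Psi_t$ a diffeomorphism of $\bar\Omega$, $\Psi_1(\gamma_0)=\gamma_1$, and $\Psi_t$ equal to the identity outside a compact subset of $\Omega$ (since the isotopy of $\gamma_t$ stays in the open set $\Omega$, the extension can be chosen supported in a fixed compact neighborhood of $\bigcup_t\gamma_t$). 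In particular $\Psi_t(\gamma_0)=\gamma_t\subset\Omega$ for all $t$, and the generating velocity field $Y(t,x)=\partial_t\Psi_t(\Psi_t^{-1}(x))$ lies in $C^\infty_0([0,1]\times\bar\Omega,\R^N)$ — but $Y$ need not be divergence-free.

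\textbf{Step 2: making the isotopy volume-preserving (Moser's trick).} Let $J(t,x)=\det D\Psi_t(x)>0$. Since $\Psi_t$ is the identity near $\partial\Omega$, $J(t,\cdot)\equiv 1$ there, and $\int_\Omega (1/J(t,\Psi_t^{-1}(y)))\,dy = \int_\Omega dx = |\Omega|$ by change of variables; so for each $t$ the two smooth positive densities $1$ and $(\Psi_t)_*(dx)$ have the same total mass and agree near $\partial\Omega$. Moser's argument then yields a smooth family $\Phi_t$ of diffeomorphisms of $\bar\Omega$, equal to the identity near $\partial\Omega$ and at $t=0$, with $\Phi_t^*(dx)=(\Psi_t)_*(dx)$, i.e. $\Theta_t:=\Phi_t^{-1}\circ\Psi_t$ is volume-preserving; concretely $\Phi_t$ is the flow (in an auxiliary parameter) of a vector field obtained by solving a divergence equation $\div \xi_t = \partial_t\!\big((\Psi_t^{-1})^*J\big)$ on $\Omega$ with the standard right-inverse of the divergence on the fixed domain $\Omega$, which is smooth in $t$ and vanishes near $\partial\Omega$ when the data do. The composite isotopy $\Theta_t$ is smooth in $(t,x)$, equals the identity at $t=0$ and near $\partial\Omega$, satisfies $\det D\Theta_t\equiv 1$, and still sends $\gamma_0$ to a curve/surface inside $\Omega$ at every time — although we must track what $\Theta_t$ does to $\gamma_0$, since $\Phi_t$ perturbs $\gamma_t$; this is the point that needs care.

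\textbf{Step 3: fixing the endpoint and concluding.} The corrected flow no longer has $\Theta_1(\gamma_0)=\gamma_1$ exactly; but $\Theta_1$ and $\Psi_1$ differ by $\Phi_1$, a volume-preserving diffeomorphism of $\bar\Omega$ fixing a neighborhood of $\partial\Omega$. One then needs to absorb $\Phi_1^{-1}$: since the isotopy can be reparametrized, run the Moser correction so that it is trivial at $t=1$ as well — i.e. choose the target density interpolation so that $(\Psi_t)_*(dx)$ is corrected by a $\Phi_t$ with $\Phi_0=\Phi_1=\mathrm{id}$ is \emph{not} generally possible, so instead perform the correction on $[0,1/2]$, keep $\Theta$ volume-preserving and constant in $t$ on $[1/2,1]$ after having arranged $\Theta_{1/2}$ to already be volume-preserving, and over $[1/2,1]$ insert a second volume-preserving isotopy (again by isotopy extension, now within the group of volume-preserving diffeomorphisms, using that $\Theta_{1/2}(\gamma_0)$ and $\gamma_1$ are smoothly isotopic \emph{through volume-preserving ambient maps} because both bound equal volumes — here one invokes the volume-preserving isotopy extension theorem of Dacorogna–Moser / Banyaga) carrying $\Theta_{1/2}(\gamma_0)$ onto $\gamma_1$. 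Finally set $X(t,x):=\partial_t\Theta_t(\Theta_t^{-1}(x))$; then $X\in C^\infty_0([0,1]\times\bar\Omega,\R^N)$, $\div X=0$ (differentiate $\det D\Theta_t\equiv 1$ in $t$), $\phi^X=\Theta$, and $\phi^X([0,1],0,\gamma_0)\subset\Omega$ with $\phi^X(1,0,\gamma_0)=\gamma_1$.

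\textbf{Main obstacle.} The delicate point is Step~3: reconciling the volume-preservation obtained from Moser's trick with the exact matching $\phi(1,0,\gamma_0)=\gamma_1$, since the Moser correction distorts the intermediate surfaces. The clean way is to phrase everything in terms of the \emph{volume-preserving} isotopy extension theorem (Banyaga, Dacorogna–Moser), applied to the pair $\gamma_0,\gamma_1$ once one knows they are isotopic through an ambient isotopy and enclose the same volume; the hypothesis $\mathfrak P$ plus the equal-volume constraint (implicit in the Lagrangian controllability setup) is exactly what makes this applicable, and the compact support away from $\partial\Omega$ propagates through because the whole construction takes place in a fixed compact subset of the open set $\Omega$.
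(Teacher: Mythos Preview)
The paper does not give a self-contained argument here: it simply invokes Krygin's result \cite{krygin} (volume-preserving isotopy extension) and, for explicit constructions, refers to \cite{GlHo08} in dimension~$2$ and to an unpublished ``pipe'' construction between $\gamma_0$ and $\gamma_1$ in dimension~$3$.

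Your proposal reaches the same endpoint but by a circuitous route. The difficulty you flag in Step~3 is genuine: the Moser correction $\Phi_t$ displaces the intermediate surfaces, so $\Theta_1(\gamma_0)\neq\gamma_1$ in general, and your repair on $[1/2,1]$ appeals to the volume-preserving isotopy extension theorem (Banyaga / Dacorogna--Moser). But that is exactly Krygin's theorem, the one black box the paper cites. Once that result is on the table, Steps~1--2 are superfluous: one applies it directly to the isotopy $h$ supplied by property~$\mathfrak P$ (together with the equal-volume constraint, which --- as you correctly observe --- is necessary though not stated in the theorem), and the compactly supported divergence-free generator $X$ drops out immediately. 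In other words, your ``clean way'' in the final paragraph \emph{is} the paper's proof; the preceding steps are a detour that neither avoids nor simplifies the hard input. If you want a genuinely constructive alternative that bypasses Krygin/Banyaga, you would need to localize the Moser correction away from the moving surface $\gamma_t$ at each time (so that $\Phi_t$ fixes a tubular neighborhood of $\gamma_t$), which is closer in spirit to the explicit constructions the paper alludes to.
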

In the general situation, this is a direct consequence of a result of Krygin \cite{krygin}.
An explicit proof of Theorem~\ref{th:exX} is given in \cite{GlHo08} when $N=2$. When $N=3$ an explicit but technical construction can also be made, relying on the construction of a ``pipe'' between $\gamma_0$ in $\gamma_1$. \par
%
%
%
%
%
%
%
%
%
\subsection{Extension of analytic solutions of the Stokes equation across the boundary}
As in \cite{GlHo08} and \cite{GlHo10}, an important case corresponds to the case when $\gamma_0$ and $X$ (such as given in Theorem~\ref{th:exX}) are analytic data. 
We show below that a solution of Stokes equation in an analytic domain with analytic data on the boundary can be extended across the boundary, which will allow to use Theorem~\ref{th:approxharm1}. 
\par
Consider $\gamma$ a $C^\omega$ Jordan surface in $\Omega$ and $X$ a $C^\omega$ divergence free vector field on $\Omega$. Depending on $\Sigma$ and $\gamma$, we introduce a new open set $\widetilde{\Omega}(\gamma)$ as follows:
\begin{itemize}
\item If $\gamma$ is contractible in $\Omega$, we set $\widetilde{\Omega}(\gamma):=\mathring{\hbox{int}(\gamma)}$.
\item If $\gamma$ is not contractible in $\Omega$ and if $\Sigma \subset \hbox{int}(\gamma)$, we set
$\widetilde{\Omega}(\gamma)=\Omega\setminus \hbox{int}(\gamma)$. 
\item If $\gamma$ is not contractible and if $\Sigma$ meets $\partial \Omega\setminus \hbox{int}(\gamma)$, we can always assume that $\Sigma \subset \partial \Omega\setminus \hbox{int}(\gamma)$, and we set $\widetilde{\Omega}(\gamma):= \Omega\cap \hbox{int}(\gamma)$.
\end{itemize}

We first recall the following result on the Stokes equation (see \cite{temam09} or \cite{boyerfabrie}).
\begin{proposition}
Let $\Omega$ a bounded regular domain in $\BbR^N$, $N=2$ or $3$. For any $u_0 \in H^{1/2}(\partial \Omega)^N$ such that
$$\int_{\partial \Omega}u_0 \cdot n \, d\sigma=0,$$
there exists a unique solution $(u,p)\in H^1(\Omega)^N \times L^2(\Omega)$ to \eqref{eq:stokes} with $u=u_{0}$ on $\partial \Omega$.
\end{proposition}
The following proposition proves our claim on the extension of analytic solutions across the boundary. 
\begin{proposition} \label{prop:ext}
Let $X\in C^\omega(\Omega)$ such that $\Div(X)=0$ in $\Omega$. 
Let $(u,p)$ be a solution of \eqref{eq:stokes.1}, \eqref{eq:stokes.2}, \eqref{eq:stokes.3} in $\widetilde{\Omega}(\gamma)$ such that 
$u=X$ on $\gamma$, and such that $u_{|\partial \widetilde{\Omega}(\gamma)\setminus\gamma}=0$ (this condition can be empty).
Then there exists a neighborhood $ \mathcal{V}_\delta(\gamma)$  such that $(u,p)$ can be extended to a solution of \eqref{eq:stokes.1}, \eqref{eq:stokes.2} in $\mathcal{V}_\delta(\gamma)\cup \widetilde{\Omega}(\gamma)$.
\end{proposition}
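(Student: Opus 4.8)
The plan is to establish real-analytic regularity of $(u,p)$ up to and across the boundary component $\gamma$, and then use the Cauchy--Kovalevskaya machinery (or, equivalently, analytic continuation of solutions of elliptic systems with analytic coefficients) to extend the solution into a one-sided neighborhood of $\gamma$. The key point is that $\gamma$ is a $C^\omega$ hypersurface, the boundary datum $u = X$ on $\gamma$ is the restriction of a globally analytic vector field, and the Stokes operator is an elliptic system with constant (hence analytic) coefficients. Therefore Morrey--Nirenberg type analytic elliptic regularity applies: a solution of an analytic elliptic boundary value problem with analytic data on an analytic piece of the boundary is analytic up to that piece of the boundary.

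Concretely, first I would recall that $(u,p)$ is a weak solution in $\widetilde{\Omega}(\gamma)$, so by interior elliptic regularity $(u,p)$ is $C^\infty$ in $\widetilde{\Omega}(\gamma)$ and in fact $C^\omega$ there (constant coefficient elliptic system). Next, near $\gamma$ I would flatten the boundary by an analytic change of variables (possible since $\gamma$ is $C^\omega$), transforming the Stokes system into an elliptic system with analytic coefficients on a half-ball, with analytic Dirichlet data coming from $X\circ(\text{analytic chart})$, and the complementing (Lopatinskii--Shapiro) condition satisfied since Dirichlet conditions are admissible for Stokes. Then I would invoke the analytic regularity theorem for such boundary value problems (e.g. Morrey, \emph{Multiple Integrals in the Calculus of Variations}, Ch.~6, or the treatment via Cauchy--Kovalevskaya for elliptic systems) to conclude $(u,p)\in C^\omega$ up to $\gamma$ from inside $\widetilde{\Omega}(\gamma)$. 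One should also verify that the other boundary piece $\partial\widetilde\Omega(\gamma)\setminus\gamma$, where $u=0$, causes no interference: it is at positive distance from $\gamma$, so analyticity near $\gamma$ is unaffected.

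Once $(u,p)$ is analytic in a (one-sided, i.e.\ relatively open) neighborhood of $\gamma$ in $\overline{\widetilde\Omega(\gamma)}$, together with all its derivatives on $\gamma$, I would produce the extension by solving the Stokes system as a (non-characteristic) Cauchy problem across $\gamma$: in the flattened coordinates the hypersurface $\gamma$ is non-characteristic for the Stokes system (the principal symbol of $-\Delta$ in the normal direction is nondegenerate, and $p$ is recovered from $\div$ of the momentum equation), so Cauchy--Kovalevskaya yields a unique analytic solution $(\overline u,\overline p)$ on a two-sided neighborhood, whose restriction to the $\widetilde\Omega(\gamma)$ side coincides with $(u,p)$ by uniqueness of analytic solutions. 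Taking $\delta>0$ small enough that $\mathcal V_\delta(\gamma)$ lies inside the neighborhood of convergence, and gluing $(\overline u,\overline p)$ with $(u,p)$ on the overlap, gives a solution of \eqref{eq:stokes.1}--\eqref{eq:stokes.2} on $\mathcal V_\delta(\gamma)\cup\widetilde\Omega(\gamma)$, as required. (The normalization \eqref{eq:stokes.3} is not preserved but is irrelevant here since only the PDEs are asked for; the pressure is determined up to an additive constant, which can be absorbed.)

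The main obstacle is the boundary analytic regularity step: proving that the weak solution is actually $C^\omega$ \emph{up to} the analytic boundary piece $\gamma$, rather than merely $C^\infty$. This requires a careful citation of (or short derivation from) the Morrey--Nirenberg analytic regularity theory for elliptic boundary value problems satisfying the Agmon--Douglis--Nirenberg complementing condition, checking in particular that Stokes with Dirichlet data fits that framework (which it does, e.g.\ by the results for the Stokes system, or by eliminating the pressure to get a single elliptic equation). Everything after that — the change of variables, the non-characteristic check, and the Cauchy--Kovalevskaya application — is routine. I would also take care that the analytic extension of $X$ is not needed globally: only the germ of $X$ along $\gamma$ enters, and $X\in C^\omega(\Omega)$ already provides an analytic representative near $\gamma$.
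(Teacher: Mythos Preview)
Your proposal is correct and follows essentially the same approach as the paper: both rely on the fact that the Stokes system satisfies the Agmon--Douglis--Nirenberg ellipticity and complementing conditions, then invoke Morrey's analytic boundary regularity theory (the paper cites Morrey's book, Section~6.6, just as you do) to conclude analyticity up to $\gamma$, from which the extension follows. Your write-up is more explicit than the paper's terse treatment---you spell out the flattening, the non-characteristic check, and the Cauchy--Kovalevskaya step for the actual extension across $\gamma$---but the mathematical route is the same.
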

\begin{proof}[Proof of Proposition~\ref{prop:ext}]
This is a classical result. Indeed, for $N=2,3$, it is proven
in \cite{BOC-GUN95} (see also \cite{Costa-Dau-Nic} and \cite{GUO-SCH}) that the Stokes systems satisfies the
Agmon-Douglis-Niremberg ellipticity conditions (see \cite{AgDoNi1} and \cite{AgDoNi2}.)
Since $X$ is analytic and since $\gamma$ is analytic, the analyticity of $(u,p)$ up to $\partial \widetilde{\Omega}(\gamma)$ is then classical (see \cite[Section 6.6]{Mo08})).
This result also follows from the regularity results given in \cite{GUO-SCH}.
When $N=3$, one may use a similar argument as in \cite{Costa-Dau-Nic} as mentioned by \cite{Costa-priv}. \par
\end{proof}

We now follow the strategy given in \cite{GlHo08}.
\subsection{Proof of the main result when $X$ and $\gamma_0$ are analytic}
%
%
%
%
%
We first assume that $\gamma_0$ is real analytic and that $X\in C_0^1([0,1];C^\omega(\Omega))$. 
Due to the regularity of $X$, it follows that
\begin{equation*}
\gamma(t):= \phi^X(t,0,\gamma_0)
\end{equation*}
defines a continuous family of real analytic surfaces. As in Proposition~\ref{prop:ext}, we consider the solution  $(u(t),p(t))$ of
\begin{equation*}
\left\{ \begin{array}{l}
- \Delta u + \nabla p = 0 \text{ in } \widetilde{\Omega}(\gamma(t)), \\
\Div u =0 \text{ in } \widetilde{\Omega}(\gamma(t)), \\
u(t)=X(t) \text{ on } \gamma(t) \\
u(t)=0 \text{ on } \partial \widetilde{\Omega}(\gamma(t)) \setminus \gamma(t) .
\end{array} \right.
\end{equation*}
Then Proposition~\ref{prop:ext} which determines some neighborhood ${\mathcal V}_{\delta(t)}(\gamma(t))$ of $\gamma(t)$ such that $(u,p)$ extends to $V_{t} \cup \widetilde{\Omega}(\gamma(t))$. \par
Let us now point out that, due to the analyticity and compactness of $\gamma_0$, one can obtain a uniform size $\delta(t)$ of the neighborhood ${\mathcal V}_{\delta(t)}(\gamma(t))$ in $[0,T]$ and moreover get that $u \in C([0,1]; C^\infty(\widetilde{\Omega}(\gamma(t)) \cup {\mathcal V}_{\delta}(\gamma(t)))$ (with the obvious abuse of notations). 
First, since $\gamma$ and $X$ are continuous functions of time with values in real analytic functions in space and compose the data in Proposition~\ref{prop:ext}, we can deduce from (the proof of) Proposition~\ref{prop:ext} that for each $(t,x)$ in $[0,T] \times \gamma$, there is a neighborhood $U_{x}$ of $x$ and $(t-\eta,t+\eta)$ of $t$ such that $u$ can be extended in $U_{x}$ for all times in $(t-\eta,t+\eta)$. Using the compactness of $\gamma$ and the unique continuation principle, we can extend this to a whole neighborhood $\mathcal{V}_{\delta(t)}(\gamma(t))$ of $\gamma$ for times in $(t-\eta_{t},t+\eta_{t})$.
Now given $\eps>0$, using the compactness of $[0,T]$, we can obtain $0 \leq t_1 < \dots <t_n \leq 1$, $\eta_{1}, \dots, \eta_{n}>0$ and $\delta>0$ such that
\begin{gather}
\nonumber
[0,1] \subset \cup_{i=1}^n (t_i-\eta_{i}, t_i+ \eta_{i}), \\
\nonumber
\forall t \in [t_i-\eta_{i},t_i+\eta_{i}] \cap [0,1], \ \gamma(t) \subset {\mathcal V}_{\delta/2}(\gamma(t_i)), \\
\label{eq:continuiteunifX}
\forall s, t \in [t_i-\eta_i,t_i+\eta_i], \ \ \| u(s,\cdot) - u(t,\cdot) \|_{C^k(\overline{{\mathcal V}_{\delta/2}}(\gamma(t_i)))} \leq \eps,
\end{gather}
and
\begin{equation*}
\forall (t,s)\in (t_i-\eta_{i},t_i+\eta_{i}) \cap [0,1], \ \|X(t,\cdot)-X(s,\cdot)\|_{C^k({\mathcal V}_{\delta/2}(\gamma(t_i))} \leq \eps.
\end{equation*}
This includes in particular the uniformity that we claimed. More details in the harmonic case can be found in \cite[Proof of Lemma~3]{GlHo08}). \par
Now for each $i$, we determine Jordan surfaces $\widehat{\gamma}_i$ in $\Omega\setminus \widetilde{\Omega}(\gamma(t_i))\cap [{\mathcal V}_{3\delta/4}(\gamma(t_i)) \setminus \overline{\mathcal V}_{\delta/2}(\gamma(t_i))]$ such that $\gamma(t_i) \subset \widetilde{\Omega}(\widehat{\gamma}_i)$. The solution $(u(t_i),p(t_i))$ can hence be extended up to $\widehat{\gamma}_i$. \par
We now apply Theorem~\ref{Thm:RWforS} (one could use Theorem~\ref{th:dens} as well). Given $\eps>0$, we can find a solution $(U_i,P_i)$ of \eqref{eq:stokes} on $\Omega$ such that $U_{i|\partial \Omega} \in H^{1/2}_m(\Sigma)$ and such that
\begin{equation} \label{eq:est1}
\| U_i - u_i \|_{H^{1/2}_m(\widehat{\gamma}_i)} < \eps.
\end{equation}
By classical elliptic estimates, we infer from \eqref{eq:est1} that, for any $k$, there exists $C$ (not depending on $\eps$, but depending on $k$, $\widehat{\gamma}_i$ and $\delta$) such that
\begin{equation} \label{eq:est2}
\| U_i - u_i \|_{C^k(\overline{{\mathcal V}_{\delta/3}(\tilde{\Omega}(\gamma(t_i)))})} < C \eps.
\end{equation}
Take $(\kappa_i)_{i=1\dots n}$ a smooth partition of unity associated to the covering $(t_i-\eta_{i},t_i+\eta_{i})_{i=1\dots n}$ and consider 
\begin{equation*}
U(t,x) := \sum_{i=1}^n \kappa_i(t) U_i(x) \text{ and } P(t,x) := \sum_{i=1}^n \kappa_i(t) P_i(x). 	
\end{equation*}
Then for all $t\in [0,1]$ $(U(t,\cdot),P(t,\cdot))$ is a solution of \eqref{eq:stokes}, and we have
\begin{equation} \label{eq:est3}
\max_{t\in [0,1]} \| U - u \|_{C^k(\overline{{\mathcal V}_{\delta/3}(\tilde{\Omega}(\gamma(t)))})} < C \eps,
\end{equation}
for some $C$ independent of $\eps$. This implies that, provided $\eps$ is small enough,
\begin{equation} \label{eq:est3-bis}
\forall t \in [0,1], \ \|U\|_{C^k(\overline{\tilde{\Omega}(\gamma(t))})} < 1 + \|u\|_{C^k(\overline{\tilde{\Omega}(\gamma(t))})}.
\end{equation}
Note that $\phi^u(t,0,\gamma_0)=\phi^X(t,0,\gamma_0)$, and thus provided that $\eps>0$ is small enough we have, by Gronwall's lemma
\begin{equation*}
\|\phi^U(t,0,\gamma_0)-\phi^u(t,0,\gamma_0)\|_{\infty}\leq \| u - U \|_{C^0([0,1];C^0(\overline{V_{\eta/3}}(\gamma(t))))} e^{\|u\|_{L^1(0,1;W^{1,\infty}(V_{\eta/3}(\gamma(t))))}},
\end{equation*}
which proves in particular that the flow of $\gamma_0$ by $u$ remains in $\Omega$.
By classical regularity and using again Gronwall's lemma and \eqref{eq:est3}, we get
\begin{equation*}
\| \phi^U(t,0,\gamma_0) - \phi^u(t,0,\gamma_0) \|_{C^k(\mathbb{S}^{N-1})} \lesssim
\| u - U \|_{C^0([0,1];C^k(\overline{V_{\eta/3}}(\gamma(t))))}  e^{\|u\|_{L^1(0,1;W^{1+k,\infty}(V_{\eta/3}(\gamma(t))))}},
\end{equation*}
which ends the proof of Theorem~\ref{th:2d3d} in this situation. \par
\begin{remark}
If one wants to use Theorem~\ref{th:dens} rather than Theorem~\ref{Thm:RWforS} to get the ``extended approximation'' $(U_i,P_i)$, one chooses $K= \overline{\mathcal V}_{\delta/2}(\gamma(t_i)) \cup (\partial \Omega \setminus \Sigma)$ and one approximates the function given by $(u(t_{i}),p(t_{i}))$ on ${\mathcal V}_{\delta/2}(\tilde{\Omega}(\gamma(t_i)))$ and by $0$ on a neighborhood of $\partial \Omega \setminus \Sigma$. One does not get exactly $U_{i}=0$ on $\Sigma$, but one can remove a solution of \eqref{eq:stokes} given by Proposition~\ref{prop:ext} and associated with boundary conditions given by $U_{i}$ on $\partial \Omega \setminus \Sigma$ and to an extension of $U_{i}$ on $\Sigma$ of comparable size. This way we obtain a suitable approximation satisfying moreover the boundary conditions on $\partial \Omega \setminus \Sigma$.
\end{remark}
%
%
%
\subsection{Proof of the main result when $X$ is analytic and $\gamma_0$ is smooth.}
We will only consider the case of $N=3$, the case $N=2$ being similar.
In this part we assume that $X$ is a solenoidal vector field in $C([0,1]; C^\omega(\Omega))$, and that $\gamma_0$ is a smooth (but not necessarily analytic) embedded $2$-sphere in $\Omega$.
 \par
According to a result of Whitney (see \cite{Whi36}), $\gamma_0$ is embedded in a smooth family of smooth Jordan surfaces $\gamma_\nu,\ \nu \in (-\nu_0,\nu_0)$, such that for $\nu \neq 0$, $\gamma_\nu$ is analytic, and for $\nu\neq \nu'$ we have $\gamma_\nu \cap \gamma_{\nu'}=\emptyset$. We can assume that $\gamma_0\subset \hbox{int}(\gamma_\nu)$ for $\nu>0$. \par
Now for each $\nu>0$ and any $t\in [0,1]$, we consider $(u_\nu(t),p_\nu(t))$ given by Proposition~\ref{prop:ext} with $\phi^{X}(t,0,\gamma_\nu)$ instead of $\gamma$. Then given $\eps>0$, we determine $(U_{\eps,\nu}(t),P_{\eps,\nu}(t))$ such that \eqref{eq:approxlagrcontr2d} holds with $\gamma_\nu$ instead of $\gamma_0$.
 
Let us point out that, using elliptic regularity (noting that the constant in the elliptic regularity estimate is uniform in $\nu$ due to $\gamma_{\nu} \rightarrow \gamma_{0}$ in $C^{\infty}$ as $\nu \rightarrow 0$), \eqref{eq:continuiteunifX} and \eqref{eq:est3-bis}, we have uniform bounds with respect to $\nu$ on $\| U_{\eps,\nu} \|_{C([0,1];C^k(\phi^{U_{\eps,\nu}}(t,0,\gamma_\nu))}$. \par
Therefore we get by Gronwall's lemma that
\begin{multline*}
\| \phi^{U_{\eps,\nu}}(t,0,\gamma_\nu) - \phi^{U_{\eps,\nu}}(t,0,\gamma_0) \|_{C^k(\mathbb{S}^2)} \leq \\
C_{k} \| \gamma_0 - \gamma_{\nu}\|_{C^k(\mathbb{S}^2)}
\exp \left( \int_0^1 \| U_{\eps,\nu} \|_{C^{k+1}(\hbox{int} (\phi^{U_\eps}(t,0,\gamma_{\nu}))} \, dt \right),
\end{multline*}
which proves the result in this situation.
\subsection{Proof of the main result in the general case}
Again we will consider the case $N=3$. We assume now that we are in the general case, that is, $X$ is merely $C^{\infty}(\overline{\Omega};\BbR^{3})$. Let $\lambda>0$ such that
\begin{equation*}
\max_{t \in [0,1]} \mbox{dist}(\phi^{X}(t,0,\gamma_{0}) , \partial \Omega) >2 \lambda.
\end{equation*}
We define ${\mathcal U}_{t}:= \overline{\mathcal V}_{\lambda}(\hbox{int}(\gamma(t)))$. Reducing $\lambda$ if necessary, we can obtain that for all $t$, ${\mathcal U}_{t}$ is diffeomorphic to a ball. \par
Now we can use the Whitney's approximation theorem (see e.g. \cite[Proposition 3.3.9]{92KRAPRA}),
for any $\mu>0$ and any $k \in \BbN$ there exists $X_{\mu} \in C([0,1];C^\omega(\BbR^3))$ such that
\begin{equation*}
\| X_{\mu} - X \|_{C([0,1];C^{k+1}({\mathcal U}_{t}))} \leq \mu.
\end{equation*}
Moreover, we can ask that
\begin{equation*}
\div X_{\mu} =0 \ \text{ in } \ [0,1] \times \BbR^{3}.
\end{equation*}
This is just a matter of writing $X$ in the form $X=\curl A$ (using the fact that ${\mathcal U}_{t}$ is a topological ball) and approximating $A$ at order $k+2$. \par
%
%
Using as before the compactness of the time interval $[0,1]$ and a partition of unity (as for the proof when $X$ and $\gamma_0$ are analytic), we can obtain a smooth approximation uniformly in time. \par 
We then apply Gronwall's lemma to infer that
\begin{equation*}
\| \Phi^{X}(t,0,\gamma_{0}) - \Phi^{X_{\mu}}(t,0,\gamma_{0}) \|_{k} \leq
\| X - X_{\mu} \|_{C^{0}([0,1]; C^{k}(\overline{\Omega})} \exp(\| X \|_{L^{1}(0,1;C^{k+1}(\overline{\Omega}))}),
\end{equation*}
and apply the preceding procedure when $\gamma_0$ is smooth and $X$ is analytic. \par
This concludes the proof of Theorem~\ref{th:2d3d}.
%
%
%
%
%
%
\section{A final remark}
One may wonder if it is possible to take into account a non-trivial initial condition $(u,p)_{|t=0}=(u_0,p_0)$ at $t=0$, where $(u_0,p_0)$ is a solution of \eqref{eq:stokes}. The question is whether it is still possible to solve the problem of approximate Lagrangian controllability, that is, to find $(u,p)$ a time dependent solution of \eqref{eq:stokes} such that \eqref{eq:approxlagrcontr2d} holds and moreover satisfying $u_{|t=0} = u_0$ and $p_{|t=0} = p_0$. \par
This is actually an easy consequence of Theorem~\ref{th:2d3d}. In order to do so, we assume that $(u_0,p_0)$ is a solution of \eqref{eq:stokes} such that $u_0\in \mathcal{L}ip(\Omega)$. Consider $\tau>0$ such that for $t\in [0,\tau]$, $\phi^{u_0}(t,0,\tau)\subset \Omega$. Note that this always possible due to the regularity of $u_0$.
Define $(\widetilde{u}_0(t),\widetilde{p}_0(t))=\dfrac{(\tau-t)^2}{\tau^2}(u_0,p_0)$ then $\forall t\in [0,\tau]$, 
$\phi^{\widetilde{u}_0}(t,0,\gamma_0) \subset \Omega$ and we have $\widetilde{u}_0(\tau)=0$. We can then apply the same procedure as before and obtain
\begin{theorem}
The results of Theorem~\ref{th:2d3d} remain true when we impose $u_{|t=0} = u_{0} \in \mathcal{L}ip(\Omega)$, with the constraint that $\div u_{0} =0$ and $u_{0|\partial \Omega \setminus \Sigma}=0$.
\end{theorem}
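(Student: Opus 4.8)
The plan is to reduce the statement with a nonzero initial condition to the already-established Theorem~\ref{th:2d3d} by a simple time-rescaling device that kills the initial datum on a short initial time interval, then glues the resulting controlled solution. First I would fix $\tau>0$ small enough that the flow $\phi^{u_{0}}(t,0,\gamma_{0})$ stays in $\Omega$ for $t\in[0,\tau]$; this is possible since $u_{0}\in\mathcal{L}ip(\Omega)$ and $\gamma_{0}$ is a compact subset of the open set $\Omega$, so the ODE flow is well-defined and continuous and $\mathrm{dist}(\phi^{u_{0}}(t,0,\gamma_{0}),\partial\Omega)$ stays bounded away from $0$ for a short time. Along the way one checks that the rescaled field $\widetilde{u}_{0}(t):=\frac{(\tau-t)^{2}}{\tau^{2}}u_{0}$, together with $\widetilde{p}_{0}(t):=\frac{(\tau-t)^{2}}{\tau^{2}}p_{0}$, still solves \eqref{eq:stokes2} on $(0,\tau)\times\Omega$: indeed each equation in \eqref{eq:stokes} is linear and time-independent, so multiplying by the scalar time factor $\frac{(\tau-t)^{2}}{\tau^{2}}$ preserves $-\Delta\widetilde{u}_{0}+\nabla\widetilde{p}_{0}=0$, $\div\widetilde{u}_{0}=0$, $\int_{\Omega}\widetilde{p}_{0}\,dx=0$ and the boundary condition $\widetilde{u}_{0}=0$ on $\partial\Omega\setminus\Sigma$ (here we use the hypotheses $\div u_{0}=0$ and $u_{0|\partial\Omega\setminus\Sigma}=0$). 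Moreover $\widetilde{u}_{0}(0)=u_{0}$ and $\widetilde{u}_{0}(\tau)=0$, and by the same flow-confinement estimate (shrinking $\tau$ further if necessary, noting $\|\widetilde u_0(t)\|\le\|u_0\|$) one has $\phi^{\widetilde{u}_{0}}(t,0,\gamma_{0})\subset\Omega$ for all $t\in[0,\tau]$.

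Next I would set $\gamma_{0}':=\phi^{\widetilde{u}_{0}}(\tau,0,\gamma_{0})$, which is a smooth Jordan curve/surface in $\Omega$, obtained from $\gamma_{0}$ by the flow of a smooth divergence-free field, hence isotopic (resp.\ homotopic) to $\gamma_{0}$ in $\Omega$; therefore $\gamma_{0}'$ and $\gamma_{1}$ still satisfy property $\mathfrak{P}$. Apply Theorem~\ref{th:2d3d} on the time interval $[\tau,1]$ (rescale time to $[0,1]$ if one prefers; the quasi-static nature makes this harmless) to obtain a solution $(u_{1},p_{1})\in C^{\infty}([\tau,1]\times\overline{\Omega};\R^{N+1})$ of \eqref{eq:stokes2} with $\phi^{u_{1}}(t,\tau,\gamma_{0}')\subset\Omega$ for all $t\in[\tau,1]$ and $\|\phi^{u_{1}}(1,\tau,\gamma_{0}')-\gamma_{1}\|_{C^{k,\alpha}(\mathbb{S}^{N-1})}\le\eps$. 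Then define $(u,p)$ on $[0,1]\times\Omega$ to be $(\widetilde{u}_{0},\widetilde{p}_{0})$ on $[0,\tau]$ and $(u_{1},p_{1})$ on $[\tau,1]$; since both pieces vanish as $t\to\tau$ from the appropriate side except possibly $u_{1}$ — so one should instead multiply $u_{1}$ by a smooth time cutoff $\chi(t)$ equal to $0$ near $\tau$ and $1$ near $1$, or more simply build into the construction of $X$ in Theorem~\ref{th:exX} that the model flow is compactly supported in time in $(\tau,1)$, which the proof of Theorem~\ref{th:2d3d} already allows (there $X\in C^{\infty}_{0}$). With such a choice the glued field $(u,p)$ is smooth on $[0,1]\times\overline{\Omega}$, solves \eqref{eq:stokes2}, satisfies $u_{|t=0}=u_{0}$, $p_{|t=0}=p_{0}$, and by concatenation of flows $\phi^{u}(t,0,\gamma_{0})=\phi^{u_{1}}(t,\tau,\gamma_{0}')$ for $t\ge\tau$, so \eqref{eq:approxlagrcontr2d} and \eqref{eq:approxlagrcontr2d.1} hold.

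I expect no serious obstacle: the only points requiring a little care are (i) ensuring the concatenation is $C^{\infty}$ in time, which is handled by the time-compact-support freedom in Theorem~\ref{th:exX} (the model flow $X$ may be taken in $C^{\infty}_{0}((\tau,1)\times\overline{\Omega})$, whence the approximating $U$ of Section~5 inherits a time cutoff), and (ii) that the displacement of $\gamma_{0}$ to $\gamma_{0}'$ by the rescaled initial datum does not destroy property $\mathfrak{P}$, which follows because a volume-preserving (in fact any) isotopy composes. The regularity hypothesis $u_{0}\in\mathcal{L}ip(\Omega)$ is exactly what is needed to make $\phi^{\widetilde u_0}$ a well-defined bi-Lipschitz flow on the short interval $[0,\tau]$ and to control $\phi^{\widetilde u_0}(t,0,\gamma_0)$ inside $\Omega$ via Gronwall; the constraints $\div u_{0}=0$ and $u_{0|\partial\Omega\setminus\Sigma}=0$ are what make $(\widetilde u_0,\widetilde p_0)$ an admissible trajectory of \eqref{eq:stokes2}. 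Hence the theorem follows.
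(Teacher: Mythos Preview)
Your proposal is correct and follows essentially the same approach as the paper: both use the time-rescaled field $\widetilde{u}_{0}(t)=\frac{(\tau-t)^{2}}{\tau^{2}}u_{0}$ on a short initial interval $[0,\tau]$ to bring the initial datum to zero while keeping $\phi^{\widetilde{u}_{0}}(t,0,\gamma_{0})\subset\Omega$, and then invoke Theorem~\ref{th:2d3d} on the remainder of the time interval. Your write-up is actually more detailed than the paper's (which simply says ``we can then apply the same procedure as before''), in particular regarding the $C^{\infty}$ gluing at $t=\tau$ and the preservation of property~$\mathfrak{P}$; one small remark is that your claim that $\gamma_{0}'$ is smooth is justified not merely by $u_{0}\in\mathcal{L}ip(\Omega)$ but by interior elliptic regularity for Stokes, which makes $u_{0}$ $C^{\infty}$ on compact subsets of $\Omega$.
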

\bibliographystyle{plain}
\bibliography{Biblio-horsin}

 \end{document}